\documentclass{hussein}
\title{3-Colouring Graphs Excluding a Fixed Minor}

\author{Vida Dujmović\thanks{School of Computer Science and Electrical Engineering, University of Ottawa.} \and Hussein Houdrouge\thanks{School of Computer Science, Carleton University. Research partially funded by NSERC.} \and Pat Morin\footnotemark[2]}

\date{}

\begin{document}
\maketitle
\begin{abstract}
	We show that, for every fixed graph $H$, every $n$-vertex graph $G$ that excludes $H$ as a minor is $3$-colourable with clustering $O_H(n^{4/9})$. That is, there exists a function $f$ such that for every graph $H$, every $n\ge 1$, every $n$-vertex graph $G$ that excludes $H$ as a minor has a vertex colouring with $3$ colours in which each monochromatic component has size at most $f(H)\cdot n^{4/9}$.  This generalizes a recent result of Dujmović, Morin, Norin, and Wood (\textit{arXiv}:2507.03163) from planar graphs to all proper minor-closed graph classes and is the first improvement on clustered $3$-colouring of proper minor-closed graph classes since the upper bound of $O_H(\sqrt{n})$ due to Linial, Matoušek, Sheffet, and Tardos (\textit{Comb. Prob. Comput.}, \textbf{17}(4):577–589, 2008).
\end{abstract}
\vspace{3mm}
\hrule
\vspace{5mm}
\section{Introduction}
A \defin{vertex colouring} of a graph $G$ is a function $\varphi: V(G) \to \K$ where the elements of the set $\K$ are called \defin{colours}.  A  vertex colouring is \defin{proper} if $\varphi(u) \neq \varphi(v)$ for every edge $uv$ of $G$.  For $k \in \N$, we say a graph $G$ is \defin{$k$-colourable} if there exists a proper colouring $\varphi: V(G) \to \K$ where the cardinality of $\K$ is at most $k$. Proper colouring is a widely studied subject within graph theory. The celebrated \defin{Four Colour Theorem} for planar graphs asserts that every planar graph is $4$-colourable \cite{ROBERTSON19972, appel1989every}.  A major open problem is the \defin{Hadwiger Conjecture}, which asserts that every graph that does not contain the complete graph $K_t$ on $t$ vertices as a minor can be properly coloured with $t-1$ colours.

In the current paper, we study a relaxation of proper colouring.
Given a (not necessarily proper) colouring $\varphi$ of a graph $G$, a \defin{cluster} is a connected subgraph of $G$ whose vertices are all assigned to the same colour. A vertex-maximal cluster is called a \defin{monochromatic component}.  We say that a colouring of a graph $G$ has \defin{clustering $c$} if the size of every cluster is at most $c$.  Thus, a proper colouring is a colouring with clustering one. Recently, clustered colouring has received considerable attention, as a way of approaching difficult conjectures on proper colouring (such as the Hadwiger Conjecture or Hajós Conjecture \cite{dvořák2017islandsminorclosedclassesi,focsHWC, RelHW, ImproperColHW, KAWARABAYASHI_2008, KAWARABAYASHI2007647, LIU202227}), as an interesting question in its own right (see \cite{dujmović20253colouringplanargraphs, briański2024defectiveclusteredcolouringgraphs,CROUCH202528,Dujmović_bounded_degree, Hendrey_Wood_2019, oddhminorfreegraphs, Kang_Oum_2019, LINIAL_MATOUŠEK_SHEFFET_TARDOS_2008, DefColPerfect, LIU2018114, liu2021clusteredcoloringgraphsexcluding, liu2022clusteredgraphcoloringlayered, LIU2024103730, liu2024quasitreepartitionsgraphsexcludedsubgraph, MoharClustered, Norin20191387, Norin_Scott_Wood_2023, Esperet2014551}), and for its application in databases as in \cite{focsKleinbergMRV97}.  For further details, we refer the reader to the survey by \citet{woodservey}.

One line of research considered by \citet{LINIAL_MATOUŠEK_SHEFFET_TARDOS_2008} asks about colouring planar graphs with two colours or three colours, rather than four.  The authors show that planar graphs have $2$-colourings with clustering $O(n^{2/3})$ (which follows quickly from the Planar Separator Theorem \cite{lt-stpg-79}) and show that this is tight: there are $n$-vertex planar graphs in which every $2$-colouring has a cluster of size $\Omega(n^{2/3})$.  The same authors show that $n$-vertex planar graphs (and, more generally, all proper-minor-closed families of graphs) have $3$-colourings with clustering $O(n^{1/2})$ (which again follows quickly from the Planar Separator Theorem \cite{lt-stpg-79} and its extension to proper minor-closed graph classes \cite{AlonSeymourThomas}).  On the lower bound side, there are some $n$-vertex planar graphs in which every $3$-colouring has a cluster of size $\Omega(n^{1/3})$.  These two natural-looking bounds stood for many years, until recently, when \citet{dujmović20253colouringplanargraphs} showed that planar graphs have $3$-colourings with clustering $O(n^{4/9})$:

\begin{theorem}[\citet{dujmović20253colouringplanargraphs}]\label{planar_main_theorem}
  Every $n$-vertex planar graph has a $3$-colouring with clustering $O(n^{4/9})$.
\end{theorem}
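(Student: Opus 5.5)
The plan is to mimic the $O(\sqrt n)$ separator argument of \citet{LINIAL_MATOUŠEK_SHEFFET_TARDOS_2008}, but to save a polynomial factor by exploiting planar structure (BFS layerings) instead of small separators alone. The basic tool is a BFS layering $L_0,L_1,\dots$ of $G$ from a root. Each layer $L_i$, together with the vertices between layers, induces a graph that can be contracted to a planar graph of radius at most $2$ (contract all earlier layers to the root). We split the layers into consecutive blocks of width $w$, to be optimised later, and use one colour, say colour $3$, on a thin set of \emph{separating layers} between blocks. Since there are $w$ possible offsets, averaging lets us choose the separating layers to hold at most $n/w$ vertices in total. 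After deleting them, each block has bounded radius relative to a contracted root, so by the radius-based planar separator (the Lipton--Tarjan fundamental-cycle argument) it has small separators consisting of $O(w)$ vertical paths.

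Step one: colour each block with colours $1$ and $2$ using the $2$-colouring of \citet{LINIAL_MATOUŠEK_SHEFFET_TARDOS_2008} applied recursively with these path separators. Recursive separation of an $m$-vertex block with separators of size $O(\sqrt{wm})$ (treewidth $O(w)$ for bounded-radius layered pieces) gives monochromatic components of size about $(wm)^{1/3}\cdot\mathrm{poly}$, which is smaller than $n^{4/9}$ for suitable $w$. Step two: handle colour $3$. The components of colour $3$ live inside separating layers. A single layer $L_i$ is an induced subgraph of a radius-$O(1)$ planar minor, so it can be $2$-coloured with small clusters. Rather than giving a layer a single colour, we therefore split it further. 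Vertices of the layer are coloured $3$ except on a sparse set recoloured $1/2$, chosen via separators of the layer graph of size $O(\sqrt{|L_i|})$, and we make sure that the recoloured vertices do not merge the adjacent blocks' clusters. This is where the exponent $4/9$ comes from: balancing the block clusters $\approx (wn)^{1/3}$ against the layer clusters, which depend on $n/w$ and on the separator size, should give optimal $w\approx n^{1/3}$ and clustering $n^{4/9}$.

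The main obstacle is step two. Recolouring vertices on a separating layer can connect colour-$1$ or colour-$2$ clusters of the two neighbouring blocks, creating large monochromatic components across layers. My first attempt is a two-level scheme. Colour the separating layers $3$, but first cut each separating layer into pieces of size $n^{4/9}$ using $O(n^{2/9})$-sized separators. Those separator vertices are then absorbed into the blocks' colouring, and they are made to have a prescribed colour (a precoloured-boundary version of the Linial et al.\ recursion, where precoloured vertices are themselves separated by vertices of the opposite colour). The obstacle is verifying that these absorbed vertices, totalling $O(n/w \cdot n^{-2/9})$, do not inflate any cluster beyond $O(n^{4/9})$.
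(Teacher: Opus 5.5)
\textbf{Step one fails.} Your claim that a block of width $w$ with $m$ vertices can be $2$-coloured with clusters of size about $(wm)^{1/3}$ is false, and the exponent $4/9$ you extract from it rests on this miscalculation. A block is bounded in width, not in number of vertices, and bounded width (hence treewidth $O(w)$) does not help $2$-colouring much. Take the fan: a path $p_1\cdots p_{m-1}$ plus a vertex $r$ adjacent to every $p_i$. Suppose $r$ has colour $1$ and $k$ of the $p_i$ have colour $1$. Then the cluster of $r$ has $k+1$ vertices. The $m-1-k$ path vertices of colour $2$ fall into at most $k+1$ subpaths, one of which has at least $(m-1-k)/(k+1)$ vertices. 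So every $2$-colouring of the fan has a cluster of size $\Omega(\sqrt m)$.

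Every BFS of an $n$-vertex fan has depth at most $2$, so for $w=n^{1/3}$ the whole fan lies in one window of layers. An offset with no separating layer among these at most three layers is a legitimate outcome of your averaging. Step one would then have to $2$-colour the fan with clustering $O(n^{4/9})=o(\sqrt n)$, which is impossible. (If instead a separating layer hits the path, it becomes a single colour-$3$ cluster of size $\Theta(n)$, and the whole problem moves into Step two.)

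More generally, the Linial et al.\ recursion on an $m$-vertex piece of treewidth $O(w)$ uses $q$-separators of size $O(wm/q)$ and balances at $q\approx\sqrt{wm}$, not $(wm)^{1/3}$. With $m=\Theta(n)$ this is no better than the old $O(\sqrt n)$ bound. So the third colour cannot be confined to thin separating layers; it has to do real work inside the blocks.

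\textbf{Step two is left open.} You correctly identify it as the crux. Colour $3$ occupies up to $n/w=n^{2/3}$ vertices whose layers may be connected. You give no argument that recolouring separator vertices of these layers avoids merging clusters of adjacent blocks.

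\textbf{The missing idea.} The paper follows Dujmovi\'c, Morin, Norin, and Wood; its argument is the $g=0$ case of the proof of \cref{genus_main_theorem}.
\begin{enumerate}
\item With $q=n^{4/9}$, take an inclusion-minimal $q$-separator $S$ of size $O(n/\sqrt q)=O(n^{7/9})$.
\item Weight each face of $G[S]$ by the number of vertices of $G-S$ inside it. Minimality gives every $v\in S$ incident face-weight at least $q$. By \cref{planar_tw_by_face_weights} this bounds $\tw(G[S])$ by $O(\sqrt{n/q})$, independently of $|S|$.
\item Iterated noose separators (\cref{lem::reduce_tw} with $t=n/q^2=n^{1/9}$) delete a set $S'$ of $O(q)$ vertices, leaving treewidth $O(n^{1/9})$.
\item \Cref{lem:tw_separator_bound} then gives a $q$-separator $S''$ of $G[S]-S'$ of size $O(n^{1/9}\cdot n^{7/9}/n^{4/9})=O(n^{4/9})$.
\end{enumerate}
Now colour $G-S$, $G[S]-(S'\cup S'')$, and $S'\cup S''$ with the three colours. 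The first two classes have components of size at most $q$. The third class has only $O(n^{4/9})$ vertices in total, so the cluster-merging problem of your Step two never arises.
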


In this paper, we consider  a natural follow-up question: can \cref{planar_main_theorem} be generalized to graphs from any proper minor-closed family of graphs?  We answer this question positively by successive generalization of this result. We first show that it can be generalized to all graphs of bounded genus, and then to every graph excluding a fixed minor.

\begin{theorem}\label{thm::main}\label{genus_main_theorem}
	For every fixed $g\in\N$, every $n$-vertex graph of Euler-genus at most $g$ has a $3$-colouring with clustering $O_g(n^{4/9})$.
\end{theorem}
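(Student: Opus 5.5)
The plan is to reduce the bounded-genus case to the planar case, \cref{planar_main_theorem}. We cut the surface along a small number of short noncontractible cycles, colour the resulting planar piece, and then patch up the colouring near the removed vertices.

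We may assume $G$ is a triangulation of a surface of Euler genus at most $g$. Adding edges can only merge monochromatic components, so a colouring of a supergraph with clustering $c$ restricts to one of $G$ with clustering at most $c$.

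\textbf{Step 1: find a planarizing set.} We use the standard BFS-layering argument (Gilbert--Hutchinson--Tarjan / Eppstein). Fix a BFS tree $T$ rooted at $r$. There is a set $X$ consisting of at most $2g$ vertical paths of $T$, each the union of two root paths joined by a non-tree edge, such that $G-X$ is planar. We cannot afford to give $X$ its own colour, since the paths may be long.

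\textbf{Step 2: cut along $X$.} Instead of deleting $X$, we cut the surface along the tree $T_X$ formed by the at most $4g$ root paths. This produces a planar graph $G'$ with $O(gn)$ vertices, in which each vertex of $X$ is replaced by boundedly many copies lying on the boundary of a single disc face $F$. Every face of $G'$ other than $F$ is a triangle.

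\textbf{Step 3: colour the planar piece.} We need a $3$-colouring of $G'$ with clustering $O(n^{4/9})$ that also respects the boundary. Two difficulties arise.
\begin{enumerate}
\item Copies of the same vertex must receive the same colour.
\item Clusters in $G'$ may reconnect in $G$ through identified copies, gluing up to $O(g)$ planar clusters together.
\end{enumerate}

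The second point is harmless. Each vertex of $X$ has $O(g)$ copies, so identifying copies merges clusters of size $O(n^{4/9})$ only in chains. The point to verify is that these chains stay bounded: the monochromatic components along $X$ have bounded interaction.

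To control the first point, we colour the geodesic paths of $X$ in advance with a pattern that alternates colours in blocks of length $n^{4/9}$. These blocks are precoloured and must be extended. I would re-run the proof of \cref{planar_main_theorem} with this precoloured outer face. Its recursive separator scheme, which colours separator cycles in blocks and recurses inside discs with precoloured boundary, is presumably already formulated for near-triangulated discs whose boundary carries a prescribed colouring of this kind. The outer face $F$ then plays the role of such a precoloured boundary cycle.

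The main obstacle is Step 3. The planar theorem as stated is a black box, and it may not accept an arbitrary precoloured boundary of length up to $O(gn)$. My first attempt would be to extract from the planar proof the stronger lemma it surely proves by induction: a near-triangulation whose outer cycle is precoloured in blocks of size about $n^{4/9}$, with some condition on how blocks meet, extends to a $3$-colouring with clustering $O(n^{4/9})$. I would then check that the block structure on $F$ inherited from the geodesic paths satisfies the lemma's hypotheses. Since the cut has only $O(g)$ geodesic pieces, any defect should be confined to $O(g)$ transition points. These can be fixed by recolouring $O(g)$ small neighbourhoods, which costs only a constant factor depending on $g$ in the final clustering bound.
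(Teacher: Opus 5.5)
Your Step~3 is a genuine gap, and it is where all the difficulty lives. You assume the proof of \cref{planar_main_theorem} proves a precolouring-extension statement for near-triangulated discs. The planar argument that this paper generalizes is not a recursion over discs with precoloured boundary. It takes an inclusion-minimal $q$-separator $S$, uses a face-weighting of $G[S]$ to show $\tw(G[S])=O(\sqrt{n/q})$, deletes $O(q)$ vertices to lower the treewidth to $O(n/q^2)$, and then takes a $q$-separator of what remains. So there is no lemma to extract, and the one you need is far from obvious. The face $F$ can have length linear in $n$, and its vertices can have unbounded degree. A single precoloured block can therefore merge arbitrarily many interior clusters of its colour, and you give no mechanism that prevents this.

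Your ``harmless'' second difficulty is also unverified, and it is not harmless as stated. Each path of $X$ occurs twice on $\partial F$. A monochromatic component of $G'$ containing one copy of $v\in X$ is glued to the component containing another copy of $v$. That component may contain a copy of another same-coloured $u\in X$, which glues it to a further component, and so on. Nothing in the proposal bounds the length of such chains. (Minor point: cutting along the tree $T_X$ alone does not planarize the surface. You must cut along the whole cut graph, including the $g$ non-tree edges.)

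The missing idea is to never planarize $G$ itself, since there a planarizer may need $\Omega(\sqrt n)$ vertices, or, in your version, long paths. The paper proceeds as follows.
\begin{enumerate}
\item Fix an inclusion-minimal $q$-separator $S$ of $G$ with $q=n^{4/9}$ and $|S|=O_g(n^{7/9})$, and give all of $G-S$ one colour.
\item Weight each face of $G[S]$ by the number of vertices of $G-S$ inside it. Minimality gives every vertex of $S$ incident weight at least $q$, so \cref{surface_tw_by_face_weights} yields $\tw(G[S])=O_g(\sqrt{n/q})=O_g(n^{5/18})$.
\item Only now take a planarizer, of $G[S]$ rather than $G$, via shortest non-contractible nooses (\cref{planarizer_tw}). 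It has size $O_g(n^{5/18})$, so it can simply go into the colour class of total size $O(n^{4/9})$.
\item The at most $2g-1$ new faces created by planarizing are treated as faces of undefined weight in \cref{lem::reduce_tw}.
\end{enumerate}
As a result, no precoloured boundary, no identification of copies, and no chain-gluing ever arise.
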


\begin{theorem}\label{thm::3-colouring_H_free_minor}\label{minor_free_main_theorem}
	For every fixed graph $H$, every $n$-vertex $H$-minor-free graph $G$ has a $3$-colouring with clustering $O_H(n^{4/9})$.
\end{theorem}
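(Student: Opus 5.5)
We derive \cref{minor_free_main_theorem} from \cref{genus_main_theorem} using the Graph Minor Structure Theorem of Robertson and Seymour. We use it in the tree-decomposition form: $G$ has a tree-decomposition $(B_x : x\in V(T))$ with adhesion at most $k=k(H)$. Each torso $G_x$ is obtained from a graph $G_x^0$ of Euler genus at most $g(H)$ by adding at most $p(H)$ vortices of width at most $w(H)$, and then at most $a(H)$ apex vertices. The proof has two steps. First we colour a single torso. Then we glue the torso colourings along the tree while keeping cluster sizes under control.

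\textbf{Colouring one torso.} Let $A_x$ be the apex set of the torso $G_x$. Each vortex has a path-decomposition of width at most $w$ whose bags are indexed along a facial cycle of $G_x^0$. We replace each vortex by a bounded-genus surrogate. Contract each vortex bag, or equivalently add edges between consecutive attachment vertices, to obtain an auxiliary graph $\tilde G_x$ of Euler genus at most $g(H)+O(p)$. Each vertex of $\tilde G_x$ stands for at most $O(w)$ vertices of $G_x$, and each vertex of $G_x$ lies in a set of consecutive bags. Apply \cref{genus_main_theorem} to $\tilde G_x$, and give every vertex of $G_x$ the colour of the surrogate vertex it is mapped to. A monochromatic component of $G_x - A_x$ is then covered by the blow-up of a monochromatic component of $\tilde G_x$. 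To make this blow-up only an $O_H(1)$ factor, we first reduce to bounded degree inside the vortices. The standard trick is to choose the mapping so that each vortex vertex goes to a single representative bag, which is possible because the path-decomposition has bounded width. We expect this vortex handling to be routine but fiddly.

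\textbf{Apices and gluing.} This is the main obstacle. Apex vertices can have huge degree, and so can the adhesion sets that connect torsos. Any colour given to them could merge many clusters. We would handle this the way Linial et al.\ and later Liu--Wood style arguments do, by processing the tree $T$ from the root. When a torso $G_x$ is processed, the at most $k$ vertices of its adhesion set with its parent already have colours. We colour $G_x - (A_x\cup\text{adhesion})$ with the torso colouring, but first we delete the neighbourhood of the precoloured vertices from the part that receives their colour. In practice we recolour neighbours of each precoloured vertex $v$ with one of the other two colours. Vertices that would otherwise join $v$'s cluster are thereby separated, and since only $O_H(1)$ vertices are precoloured the damage stays bounded.

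Apex vertices are treated the same way. They are kept from joining large clusters by ensuring every vertex adjacent to an apex $a$ avoids the colour of $a$. This defect-to-clustering conversion is the delicate part. We would try to carry it out with an $O_H(1)$ loss by using the fact that, after removing the apices, each torso is a bounded-genus-plus-vortex graph with bounded adhesion. We would also follow the known approach in which an $n^{4/9}$-clustering result for a class that is closed under bounded-size clique-sums with bounded apices lifts the bound with only a constant loss.

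Finally, the cluster sizes sum correctly because the torsos partition $V(G)$ up to adhesion sets. Each monochromatic component of $G$ is confined, after the separations above, to $O_H(1)$ torsos. Concavity of $t\mapsto t^{4/9}$ then gives the bound $O_H(n^{4/9})$.
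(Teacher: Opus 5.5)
There is a genuine gap, and it sits where you yourself locate the main obstacle. \cref{genus_main_theorem}, used as a black box on each torso, cannot be glued into a colouring of $G$, and both repairs you suggest fail.

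Take $G=P+a$, with $P$ planar and $a$ adjacent to all of $P$; this $G$ is $K_6$-minor-free. If neighbours of $a$ may share its colour, the cluster of $a$ swallows that entire colour class of $P$, and the statement of \cref{genus_main_theorem} gives no control over the total size of a colour class. If instead every neighbour of $a$ must avoid $a$'s colour, you are $2$-colouring $P$. Linial et al.\ exhibit planar graphs in which every $2$-colouring has a cluster of size $\Omega(n^{2/3})$.

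Precoloured adhesion vertices have the same problem, since their neighbourhoods in a child torso are unbounded. Worse, an adhesion set may carry three precoloured vertices of three different colours, leaving a common neighbour with no admissible colour.

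Your claim that each monochromatic component meets only $O_H(1)$ torsos is unsupported. $T$ may have $\Theta(n)$ nodes, and components can pass from torso to torso through adhesion sets. Nor is there a general principle that clustering bounds lift through bounded clique-sums at constant loss: a path is a clique-sum of edges, each $1$-colourable with clustering $2$, yet one colour gives clustering $n$.

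The vortex step also fails as written. The width bounds the bag size, not the number of bags containing a vertex. So a vortex edge can join vertices whose representative bags are far apart along the face, and a monochromatic connected set of $G_x-A_x$ need not map into a single monochromatic component of $\tilde G_x$.

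The missing idea is a global structural statement in place of per-torso colourings. The paper never applies \cref{genus_main_theorem}. Instead it proves \cref{h_minor_free_separator}: $G$ has a $q$-separator $S$ with $|S|\in O_H(n/\sqrt q)$, and a set $S'\subseteq S$ with $|S'|\in O_H(q)$ and $\tw(G[S]-S')\in O_H(n/q^2)$. Take $q=n^{4/9}$ and a $q$-separator $S''$ of $G[S]-S'$ from \cref{lem:tw_separator_bound}. The three colour classes are then the components of $G-S$, the components of $G[S]-(S'\cup S'')$, and the set $S'\cup S''$, whose \emph{total} size is $O_H(n^{4/9})$. Apices and parent adhesions are simply placed in $S$, so they never merge clusters of $G-S$. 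Inside $G[S]$ they are absorbed by the treewidth bound and the small third class.

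The lemma is proved by induction along the Diestel--Kawarabayashi--M\"uller--Wollan decomposition:
\begin{enumerate}
\item Take a lowest node $x$ whose subtree has more than $q$ vertices.
\item Charge vortex interiors and child subtrees as face weights on the embedded part $X_0$.
\item Replace separator vertices outside $B_x$ by the corresponding adhesion sets.
\item Make the result inclusion-minimal, so that \cref{surface_tw_by_face_weights} bounds its treewidth by $O(\sqrt{n_x/q})$.
\item Planarize and apply \cref{lem::reduce_tw} only when $n_x>t^2q$.
\end{enumerate}
This threshold is what keeps $|S'|$ at $O(q)$. There may be $\Omega(n^{5/9})$ torsos, so even $O(1)$ extra vertices per torso would be too many.
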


\Cref{minor_free_main_theorem} is somewhat surprising because both the number of colours ($3$) and the asymptotic cluster size ($O(n^{4/9})$) are independent of the graph $H$.  This contrasts with a line of work on the Clustered Hadwiger Conjecture that shows that, for $K_t$-minor-free graphs, $t-1$ colours are both necessary and sufficient to achieve a colouring with clustering $O_H(1)$  \cite{DBLP:conf/focs/DujmovicEMW23}.

Although the bound $O(n^{4/9})$ is less natural-looking than the $\Omega(n^{1/3})$ lower bound for planar graphs and the previous $O(n^{1/2})$ upper bound, there is reason to believe that the true bound for $H$-minor-free is somewhat mysterious: \citet{LINIAL_MATOUŠEK_SHEFFET_TARDOS_2008} showed that there exist $n$-vertex graphs with no $K_{6}$ minor such that every $3$-colouring has a cluster of size $\Omega(n^{4/10})$.  

\subsection{Proof Overview}

To prove \cref{genus_main_theorem} and \cref{minor_free_main_theorem}, we use some of the techniques of \citet{dujmović20253colouringplanargraphs} but often in different orders and interleaved in different ways.  A standard trick used for dealing with a bounded-genus graph $G$ is to find a \defin{planarizer} $P\subseteq V(G)$ of size $O(\sqrt{n})$ such that $G-P$ is planar.  Then one applies a result for planar graphs on $G-P$ and hopes that the additional $O(\sqrt{n})$ vertices in $P$ can be dealt with efficiently.  This fails when trying to establish $o(\sqrt{n})$ bounds for clustered colouring because the planarizer $P$ has very little structure and may have size $\Omega(\sqrt{n})$.

A $q$-separator in a graph $G$ is a subset $S$ of vertices of $G$ such that each component of $G-S$ has at most $q$ vertices.  A consequence of the Planar Separator Theorem is that every $n$-vertex planar graph has a $q$-separator of size $O(n/\sqrt{q})$.
One of the crucial observations in \cite{dujmović20253colouringplanargraphs} is that inclusion-minimal $q$-separators in planar graphs induce graphs of smaller treewidth than what is guaranteed by the Planar Separator Theorem.  In particular, they show that, for an inclusion-minimal $q$-separator $S$, the treewidth of $G[S]$ is $O(\sqrt{n/q})$, independent of the size of $S$. In contrast, the Planar Separator Theorem only shows that the treewidth of $G[S]$ is $O(\sqrt{|S|})\subseteq O(\sqrt{n}/q^{1/4})$.  We show that this result also holds for graphs of bounded genus using a proof that does not making use of planarizers.  

This low-treewidth separator result then implies that a bounded-genus graph $G$ has an $O(n^{4/9})$-separator $S$ of size $O(n^{7/9})$ such that $\tw(G[S])=O(n^{5/18})$. Each component of $G-S$ has size at most $O(n^{4/9})$ so we colour each component of $G-S$ red.  This leaves the colours green and blue for the graph $G[S]$.  The graph $G[S]$ is still a bounded genus graph, but now has a planarizer $P$ of size $O(\tw(G[S]))\subseteq O(n^{5/18})$.  The graph $G[S]-P$ is a planar graph, which allows us to use (generalizations of) the argument in \cite{dujmović20253colouringplanargraphs} to colour $G[S]-P$ blue and green so that each green component has size at most $O(n^{4/9})$ and the set $B$ of blue vertices has size at most $O(n^{4/9})$. We can then finish by colouring the vertices in $P$ blue.  By construction, each red component (of $G-S$) has size $O(n^{4/9})$, each green component (of $G[S]-(P\cup B))$ has size $O(n^{4/9})$ and the number of blue vertices is only $|P\cup B|\in O(n^{4/9})$.  

Note that the size $O(n^{5/18})$ of the planarizer $P$ in the preceding argument was smaller than necessary.  The argument continues to work even with $|P|\in O(n^{4/9})$.

To establish \cref{minor_free_main_theorem}, we make use of the \emph{Graph Minor Structure Theorem} \cite{ROBERTSON200343}, which decomposes any $H$-minor-free graph into ``torsos'' that are ``near-embedded'' and such that any two torsos share only a small number of vertices that form a clique in each torso.  Very roughly, near-embedded means that most of the complexity of a torso (in particular its treewidth) comes from a subgraph of bounded genus.  With some care, this allows us to use most of the machinery that we have developed for bounded-genus graphs. We partition the decomposition into parts where each part contains at least $n^{4/9}$ vertices of $G$.  Mostly, we can apply the preceding argument individually to each part focusing on one bounded-genus graph in one torso of the part.  When we do this, each red component is of size $O(n^{4/9})$ and is restricted to one part. Each green component is of size $O(n^{4/9})$, but may contain vertices from different parts.  The total size of all blue sets $B$ that we obtain from all the parts is $O(n^{4/9})$.  However, an issue arises from the planarizers that we get from different parts. There may be $\Omega(n^{5/9})$ parts, which means we cannot afford to include a constant number of vertices in the planarizer for each part.  This requires us to set a threshold for the sizes of parts and planarize only the sufficiently large parts.  Somewhat miraculously, we are able to choose a threshold that allows us to limit the total size of the planarizers to $O(n^{4/9})$.

The remainder of this paper is organized as follows:  \cref{sec::background} reviews some background material.  \Cref{sec::proof_thm1} proves \cref{genus_main_theorem}. \Cref{sec::h_minor_free} reviews the Graph Minor Structure Theorem before proving \cref{minor_free_main_theorem}.

\section{Background}\label{sec::background}

In this paper every graph $G$ that we consider is undirected and simple, with vertex set \defin{$V(G)$} and edge set \defin{$E(G)$}.  For a graph $G$ and a set $S$, \defin{$G[S]$} is the subgraph of $G$ induced by the vertices in $S\cap V(G)$. That is, $\mathdefin{V(G[S])}:=S\cap V(G)$ and $\mathdefin{E(G[S])}:=\{vw\in E(G):v,w\in S\}$. For a rooted tree $T$ and a node $x\in V(T)$, $T_x$ denotes the subtree of $T$ that includes $x$ and all descendants of $x$.

\subsection{\boldmath Treewidth, Grids, and \texorpdfstring{$q$-Separators}
{q-Separators}}\label{treewidth_grid_q-separation}
A \defin{tree decomposition} of a graph $G$ is a collection $\mathcal{T}:=(B_x:x\in V(T))$ of vertex subsets of $G$, called \defin{bags}, that is indexed by the vertices of a tree $T$ such that:
\begin{enumerate}[nosep,nolistsep,label=(\roman*)]
	\item\label{covers_edges} for each $vw\in E(G)$, there exists $x\in V(T)$ such that $\{v,w\}\subseteq B_x$; and
	\item \label{connectivity} for each vertex $v$ of $G$, $T[\{x\in V(T): v\in B_x\}]$ is a non-empty (connected) subtree of $T$.
\end{enumerate}
A \defin{path-decomposition} of a graph $G$ is a tree-decomposition $\calP := (B_x : x\in V(P))$ where the underlying tree $P$ is a path.
The \defin{width} of a tree decomposition $\mathcal{T}:=(B_x:x\in V(T))$ is $\mathdefin{\width(\mathcal{T})}:=\max\{|B_x|:x\in V(T)\}-1$. The \defin{treewidth} of a graph $G$ is the minimum width of a tree-decomposition of $G$, taken over all tree-decompositions of $G$.

Let $G$ be a graph and $\mathcal{T}:=(B_x:x\in V(T))$ a tree-decomposition of $G$.  For each edge $xy$ of $T$, $B_x\cap B_y$ is an \defin{adhesion} of $\mathcal{T}$ incident to $x$ and $y$.  When $T$ is rooted and $x$ is the parent of $y$ in $T$, then $\mathdefin{\partial_\calT(y)}:=B_x\cap B_y$ is the \defin{parent adhesion} of $\mathcal{T}$ at $y$. If $x$ is the root of $T$ then $\mathdefin{\partial_\calT(x)}:=\emptyset$.  For each node $x$ of $T$, the \defin{torso} \defin{$\torso{G}{B_x}$} of $\mathcal{T}$ at $x$ is the graph obtained from $G[B_x]$ by adding every edge $vw$ such that $\{v,w\}\subseteq B_x\cap B_y$ for some $y\in N_T(x)$.  Thus, each adhesion of $\mathcal{T}$ at $x$ is a clique in $\torso{G}{B_x}$.  We make use of the  following lemma, which follows easily from the definition of treewidth and Helly's Theorem on trees:

\begin{lemma}\label{lem:treewidth_bound_tree_decomp}
	Let $\mathcal{T}$ be a tree-decomposition of a graph $G$.  If each torso of $\mathcal{T}$ has treewidth at most $t$, then $G$ has treewidth at most $t$.
\end{lemma}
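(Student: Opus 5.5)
The plan is to glue together optimal tree-decompositions of the torsos along the tree $T$. For each node $x\in V(T)$, fix a tree-decomposition $\mathcal{T}_x:=(C^x_a : a\in V(T^x))$ of $\torso{G}{B_x}$ of width at most $t$. We will build a tree-decomposition of $G$ whose underlying tree $T^*$ is the disjoint union of the trees $T^x$, joined by extra edges, and whose bags are the bags $C^x_a$ unchanged. Every bag then has size at most $t+1$, so $G$ has treewidth at most $t$.

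The linking edges come from the adhesions. For each edge $xy$ of $T$, the adhesion $A:=B_x\cap B_y$ is a clique in $\torso{G}{B_x}$ and in $\torso{G}{B_y}$. By the Helly property for subtrees of a tree, the subtrees $\{a\in V(T^x): v\in C^x_a\}$ for $v\in A$ pairwise intersect, since each pair of vertices of $A$ is an edge of the torso. Hence they share a common node $a_{xy}\in V(T^x)$ with $A\subseteq C^x_{a_{xy}}$. Symmetrically there is a node $b_{xy}\in V(T^y)$ with $A\subseteq C^y_{b_{xy}}$. We add the edge $a_{xy}b_{xy}$ to $T^*$. Since $T$ is a tree and each $T^x$ is a tree, $T^*$ is a tree.

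It remains to verify the two axioms.

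\textbf{Edges are covered.} Each edge $vw$ of $G$ lies in some $B_x$, hence in $G[B_x]\subseteq\torso{G}{B_x}$, hence in some bag $C^x_a$.

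\textbf{Connectivity.} Fix a vertex $v$. The nodes $x$ of $T$ with $v\in B_x$ form a subtree $T_v$ of $T$. Within each such $T^x$, the nodes whose bags contain $v$ form a nonempty subtree, and no bag of any other $T^x$ contains $v$. For each edge $xy$ of $T_v$ we have $v\in A$, so $v$ lies in both $C^x_{a_{xy}}$ and $C^y_{b_{xy}}$. Thus the linking edge $a_{xy}b_{xy}$ joins the two subtrees for $x$ and $y$. Because $T_v$ is connected, the union of these subtrees together with these linking edges is connected in $T^*$.

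The only delicate point is the Helly step, which ensures each adhesion fits inside a single bag of each adjacent torso decomposition. This is exactly where the torso edges are needed; without them the adhesion need not be a clique and the linking would fail. Everything else is bookkeeping.
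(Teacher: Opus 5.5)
Your proof is correct and follows the same approach as the paper. The paper gives no detailed proof, only remarking that the lemma follows from the definition of treewidth and Helly's Theorem on trees. Your argument supplies exactly that sketch: you use the Helly property to place each adhesion in a single bag of each adjacent torso's decomposition, and then glue the torso decompositions together along those bags.
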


For $q\ge 1$, a set $S$ of vertices in a graph $G$ is a \defin{$q$-separator} if every component of $G-S$ has at most $q$ vertices.  Variations of the following lemma are well-known (see, for example \cite{robertson1984graphminorsII,pstSublinearSep}). We include a proof here for the sake of completeness.

\begin{lemma}\label{lem:tw_separator_bound}
	For every integer $n\ge 1$, and every integer $q\ge 1$, every $n$-vertex graph $G$ with treewidth at most $t$ has a $q$-separator of size at most $(t+1)(\lceil n/q\rceil-1)$. 
\end{lemma}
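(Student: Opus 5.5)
We prove the statement by induction on $\lceil n/q\rceil$, repeatedly removing a single bag of a tree-decomposition to cut off a piece of the graph of size roughly $q$. We may assume $G$ has a tree-decomposition $\mathcal{T}:=(B_x:x\in V(T))$ of width at most $t$, so every bag has at most $t+1$ vertices. If $n\le q$, the empty set is a $q$-separator, and the bound $(t+1)(\lceil n/q\rceil -1)=0$ holds. This is the base case.

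For the inductive step, suppose $n>q$. Root $T$ at an arbitrary node $r$. For each node $x$, let $V_x:=\bigcup_{y\in V(T_x)}B_y$ be the set of vertices appearing in bags of the subtree $T_x$. Since $|V_r|=n>q$, some node $x$ satisfies $|V_x|>q$. Choose such an $x$ of maximum depth, so every child $y$ of $x$ has $|V_y|\le q$.

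Set $S_1:=B_x$, which has $|S_1|\le t+1$. Consider the vertices of $V_x\setminus B_x$. By the connectivity property of tree-decompositions, together with the fact that every edge lies in some bag, each component of $G-B_x$ that contains a vertex of $V_y\setminus B_x$ for a child $y$ is contained in $V_y\setminus B_x$. The reason is that any edge leaving $V_y\setminus B_x$ would force a vertex to appear both in $T_y$ and outside $T_y$, and hence in $B_x$. Therefore every component of $G-B_x$ that meets $V_x\setminus B_x$ has at most $q$ vertices.

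Now let $G':=G-V_x$, which has $n':=n-|V_x|<n-q$ vertices. The restriction of $\mathcal{T}$ to $T-T_x$, with each bag intersected with $V(G')$, is a tree-decomposition of $G'$ of width at most $t$. (If $G'$ is empty we are done.) Since $n'<n-q$, we have $\lceil n'/q\rceil\le\lceil n/q\rceil-1$. By induction, $G'$ has a $q$-separator $S_2$ with $|S_2|\le (t+1)(\lceil n/q\rceil-2)$.

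Take $S:=S_1\cup S_2$. We must check that every component $C$ of $G-S$ has at most $q$ vertices. Either $C$ lies inside $V_x\setminus B_x$, or $C$ avoids $V_x\setminus B_x$. In the latter case $C$ avoids all of $V_x$, since $B_x\subseteq S$, so $C$ lies in $G'-S_2$ and has at most $q$ vertices. We also need that no component mixes the two sides. This holds because all edges between $V_x\setminus B_x$ and $V(G)\setminus V_x$ are absent: every vertex of $V_x$ adjacent to something outside $T_x$ lies in $B_x$. Hence $|S|\le (t+1)(\lceil n/q\rceil-1)$, as required.

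The main point of care is this separation claim: that removing $B_x$ disconnects $V_x\setminus B_x$ from the rest, and splits $V_x\setminus B_x$ along the child subtrees. Both follow directly from property (ii) of tree-decompositions.
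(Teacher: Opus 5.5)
Your proof is correct and follows the paper's argument essentially step for step. Like the paper, you choose a deepest node $x$ whose subtree covers more than $q$ vertices, put $B_x$ into the separator, recurse on $G-V_x$, and use $\lceil n'/q\rceil\le\lceil n/q\rceil-1$; the only differences are that the paper inducts on $n$ rather than $\lceil n/q\rceil$ (immaterial), and that you justify explicitly, where the paper leaves it implicit, that $B_x$ cuts $V_x\setminus B_x$ off from the rest and splits it along the child subtrees.
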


\begin{proof}
	The proof is by induction on $n$.  If $n \le q$, then $S=\emptyset$ is a $q$-separator of $G$ with $|S|=0\le (t+1)(\lceil n/q\rceil-1)$.  We now assume that $n>q$.

	Let $\calT:=(B_x:x\in V(T))$ be a tree-decomposition of $G$ of width $t$, and root $T$ arbitrarily.
	Say that a node $x\in V(T)$ is \defin{$q$-large} if $|\bigcup_{y\in V(T_x)}B_y|> q$. Since $n>q$, the root of $T$ is $q$-large. Let $x$ be a $q$-large node of $T$ such that no strict descendant of $x$ is $q$-large. Let $G_x:=G[\bigcup_{z\in V(T_x)}B_z]$ and let $G^-:=G-V(G_x)$. If $V(G^-) = \emptyset$, then $B_x$ is a $q$-separator of $G$ of size $|B_x| \le t+1 \le (t+1)(\lceil n/q \rceil - 1)$. Otherwise, by the inductive hypothesis, $G^-$ has a $q$-separator $S^-$ of size at most $(t+1)(\lceil|V(G^-)|/q\rceil-1)\le (t+1)(\lceil|V(G)|/q\rceil-2)$. Then $S:=S^-\cup B_x$ is a $q$-separator of $G$ of size $|S^-|+|B_x|\le (t+1)(\lceil|V(G)|/q\rceil-1)$.
\end{proof}

\cref{lem:tw_separator_bound} is useful for extracting $q$-separators from tree-decompositions.  However, in many cases, the treewidth of a graph is a function of its number of vertices and this can be taken advantage of.  A \defin{graph class} is a set of graphs.  A graph class $\calG$ is \defin{hereditary} if it closed under taking induced subgraphs; that is for every $G\in\calG$ and $S\subseteq V(G)$, $G[S]\in\calG$.  A \emph{class} of graphs $\mathcal{G}$ has \defin{treewidth} $f(n)$ if, for every $n\in\N$ and every $n$-vertex graph $G$ in $\mathcal{G}$, $\tw(G)\le f(n)$. We make use of the following result.\footnote{\cref{q_separator_hereditary} is a special case of a more general result that considers hereditary graph classes in which every $n$-vertex graph has treewidth at most $c n^{\delta}$, for some $0<\delta<1$.}

\begin{lemma}[\citet{pstSublinearSep,harmoniousColorings}]\label{q_separator_hereditary}
	For every $a>0$ there exists $c>0$ such that if a hereditary graph class $\mathcal{G}$ has treewidth $a\sqrt{n}$ then, for every $n\in\N$ and every $q\ge 1$, every $n$-vertex graph in $\mathcal{G}$ has a $q$-separator of size at most $cn/\sqrt{q}$.
\end{lemma}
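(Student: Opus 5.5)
Since the class $\mathcal{G}$ is hereditary, every induced subgraph of a graph in $\mathcal{G}$ again has treewidth at most $a\sqrt{m}$, where $m$ is its number of vertices. This lets us separate recursively: at each step we take a balanced separator of the current piece, of size $O(\sqrt{m})$, and the $\sqrt{\cdot}$-type bound then makes the costs form a geometric sum. Concretely, we prove by strong induction on $n$ that every $n$-vertex $G\in\mathcal{G}$ has a $q$-separator of size at most $cn/\sqrt{q}-c'\sqrt{n}$ for $n>q$, with suitable constants $c'>0$ and $c=c(a)$. The subtracted $\sqrt{n}$ term is needed to absorb the cost of each separator.

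\textbf{Step 1 (balanced separators).} A graph of treewidth $t$ has a set $X$ of at most $t+1$ vertices, namely a bag of an optimal tree-decomposition, such that every component of $G-X$ has at most $n/2$ vertices. This is the standard centroid-bag argument. Here it gives $|X|\le a\sqrt{n}+1\le 2a\sqrt{n}$. Grouping the components of $G-X$ greedily, we split $V(G)\setminus X$ into two parts $A_1,A_2$ with no edges between them and $n/3\le |A_i|\le 2n/3$ (roughly). The greedy grouping needs a little care when one component is large, but each component has at most $n/2$ vertices, so the standard bin-packing argument gives sizes in $[n/3,2n/3]$ whenever $n$ is large enough compared with $|X|$. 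Otherwise we simply put everything into $X$.

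\textbf{Step 2 (recursion).} If $n\le q$, take $S=\emptyset$. Otherwise, set $n_i=|A_i|$, so $n_1+n_2\le n$. By induction, $G[A_i]\in\mathcal{G}$ has a $q$-separator $S_i$; if $n_i\le q$ we take $S_i=\emptyset$. Then $S=X\cup S_1\cup S_2$ is a $q$-separator of $G$, because every component of $G-S$ lies inside some $G[A_i]-S_i$. The size satisfies
\[
|S|\le 2a\sqrt{n}+\sum_i \bigl(cn_i/\sqrt{q}-c'\sqrt{n_i}\bigr).
\]

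\textbf{Step 3 (the main obstacle: closing the induction).} The main obstacle is making the $\sqrt{n}$ terms telescope. Since $n_i\in[n/3,2n/3]$, concavity gives $\sqrt{n_1}+\sqrt{n_2}\ge(\sqrt{1/3}+\sqrt{2/3})\sqrt{n}=\gamma\sqrt{n}$ with $\gamma>1$. We therefore need $2a\sqrt{n}-c'\gamma\sqrt{n}\le -c'\sqrt{n}$, which holds for $c'=2a/(\gamma-1)$.

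The base-level pieces with $n_i\le q$ contribute $0$ rather than $cn_i/\sqrt{q}-c'\sqrt{n_i}$. To handle this, we state the inductive bound as $\max\bigl(0,\,cn/\sqrt{q}-c'\sqrt{n}\bigr)$. We then check the transition regime $q<n\le 3q$ directly: there $|S|\le 2a\sqrt{n}+(\text{at most one recursive piece})$, and choosing $c\ge 3c'+6a$ makes $cn/\sqrt{q}-c'\sqrt{n}\ge c\sqrt{n}/\sqrt{3}-c'\sqrt{n}$ dominate $2a\sqrt{n}$ plus the recursive contribution.

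A cleaner alternative, if the bookkeeping gets messy, is to analyse the recursion tree directly. A piece of size $m$ at depth $d$ has $m\le(2/3)^d n$. Pieces at a fixed depth are disjoint, with sizes at least $q/3$ up to the stopping depth. The total separator cost is then $\sum_{\text{pieces}}2a\sqrt{m}$. Grouping the pieces by size class $m\in(2^{j}q,2^{j+1}q]$, there are at most $n/(2^jq)$ pieces in class $j$ (a bound that holds per depth, and each size class spans $O(1)$ depths). So the cost is $O\bigl(\sum_j (n/(2^jq))\sqrt{2^{j}q}\bigr)=O(n/\sqrt{q})$.
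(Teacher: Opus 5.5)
Your main induction is correct and is the usual recursive balanced-separator argument behind this cited result; the paper gives no proof of its own, and its proof of \cref{lem::reduce_tw} runs the same kind of recursion with a contour count. The induction consists of a centroid bag, recursion on the two sides via heredity, the $-c'\sqrt{n}$ slack with $c'(\gamma-1)\ge 2a$, and separate handling of pieces of size at most $q$ and of graphs too small to balance. The slip is in your fallback: a size class $(2^jq,2^{j+1}q]$ does not span $O(1)$ depths. What rescues the count is that sizes drop by a factor $2/3$ per level, so any ancestor--descendant chain inside one class has at most two members; the class therefore splits into two families of pairwise disjoint pieces, and the $O(n/(2^jq))$ bound still holds.
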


For instance, the class of planar graphs has treewidth $O(\sqrt{n})$, and thus every $n$-vertex planar graph has a $q$-separator of size $O(n/\sqrt{q})$.

\subsection{Models and Minors}

A \defin{model} $\mathcal{M}:=\{G_x:x\in V(H)\}$ of a graph $H$ in a graph $G$ is a set of pairwise vertex-disjoint connected subgraphs of $G$, indexed by the vertices of $H$, and such that $G$ contains at least one edge $vw$ with $v\in V(G_x)$ and $w\in V(G_y)$ for each edge $xy$ of $H$. A graph $H$ is a \defin{minor} of a graph $G$ if and only if $G$ has a model of $H$. The subgraphs of $G$ in a model $\mathcal{M}$ are called the \defin{branch sets} of $\mathcal{M}$. Treewidth is a \defin{minor-monotone} graph parameter, meaning that $\tw(H)\le \tw(G)$ if $H$ is a minor of $G$.  A graph $G$ is \defin{$H$-minor-free} if and only if $H$ is not a minor of $G$.

For every integer $k\ge 1$, the \defin{$k\times k$ grid}, denoted \defin{$\grid_k$}, is the graph on the vertex set $\{1,\ldots,k\}^2$ such that $(i, j)$ and $(i', j')$ are adjacent if and only if $|i - i'| + |j-j'| = 1$. For a graph $G$, $\mathdefin{\gm(G)}$ is the maximum integer $k$ such that $\grid_k$ is a minor of $G$.  There is a close relationship between $\gm(G)$ and $\tw(G)$.  For every integer $k\ge 1$, $\tw(\grid_k) = k$.  Since treewidth is minor-monotone, this implies $\tw(G)\ge \gm(G)$.  Lower bounding $\gm(G)$ by some function of $\tw(G)$ is an area of active research, that is still not fully resolved for general graphs.  However, for $H$-minor-free graphs, it is known that $\gm(G)$ is lower-bounded by a linear function of $\tw(G)$.

\subsection{Graphs on Surfaces}\label{graph_surfaces}

A \defin{surface} $\Sigma$ is a $2$-manifold without boundaries. More precisely, it is a compact connected Hausdorff topological space such that every point in $\Sigma$ is locally homeomorphic to the plane. Every surface $\Sigma$ is homeomorphic to a surface obtained from the sphere $\mathbb{S}^2$ by adding handles and cross-caps. The \defin{Euler-genus} \defin{$\eps(\Sigma)$} of a surface $\Sigma$ obtained by adding $h$ handles and $k$ cross-caps to $\mathbb{S}^2$ is  $2h+k$. An \defin{open arc} in $\Sigma$ is a subset of $\Sigma$ homeomorphic to the open interval $(0, 1)$. A \defin{loop} in $\Sigma$ is a subset of $\Sigma$ homeomorphic to a circle. We let $\mathdefin{\calA_{\Sigma}}$ be the set of all open arcs in $\Sigma$. The closure of a subset $X \subseteq \Sigma$ is denoted by $\mathdefin{\overline{X}}$, and the boundary $\mathdefin{\partial X}$ is equal to the intersection of $\overline{X}$ and $\overline{\Sigma - X}$. For an open arc $A\in\mathcal{A}_\Sigma$, $\partial(A)$ contains exactly two points of $\Sigma$ that are called the \defin{endpoints} of $A$.

An \defin{embedding} of a graph $G$ on a surface $\Sigma$ is a function from $\sigma:V(G)\cup E(G)\to\Sigma \cup \calA_{\Sigma}$ such that
\begin{enumerate}[nosep,nolistsep]
	\item $\sigma(v)\in\Sigma$ is a point in $\Sigma$, for each $v\in V(G)$;
	\item $\sigma(v)\neq\sigma(w)$ for all distinct $v,w\in V(G)$;
	\item $\sigma(vw)\in\calA_\Sigma$ is an arc in $\Sigma$ with endpoints $\sigma(v)$ and $\sigma(w)$, for each $vw\in E(G)$;
	\item $\sigma(vw)\cap\sigma(V(G)) = \{\sigma(v),\sigma(w)\}$ for each $vw\in E(G)$; and
	\item $\sigma(e_1)\cap\sigma(e_2)=\emptyset$ for all distinct $e_1,e_2\in E(G)$.
\end{enumerate}
The \defin{faces} of an embedding $\sigma$ are the connected components of $\Sigma\setminus \sigma(G)$. 
We say an embedding $\sigma$ is a \defin{2-cell} embedding if every face of $\sigma$ is homeomorphic to an open disc in $\R^2$. 
The \defin{Euler-genus of a graph} $G$, denoted by $\mathdefin{\eps(G)}$, is the minimum $\eps(\Sigma)$ over all the surfaces $\Sigma$ such that $G$ has a $2$-cell embedding.\footnote{The restriction to $2$-cell embeddings is for convenience only.  If a graph $G$ has an embedding on a surface of Euler genus $g$ then it has a $2$-cell embedding on a surface of Euler genus at most $g$.}  Euler genus is also a minor-monotone property: If $H$ is a minor of $G$ then $\eps(H)\le\eps(G)$. A \defin{planar graph} is a graph of Euler-genus $0$.

An \defin{embedded graph} is a graph $G$ equipped with an embedding $\sigma$ of $G$ in a surface $\Sigma$.  When there is no danger of ambiguity, we shall not distinguish between an embedded graph $G$ and its embedding. In other words, each vertex $v$ of $G$ is a point $\sigma(v)$ in $\Sigma$, and each edge is an arc $\sigma(e)$ in $\Sigma$.  The set of faces of an embedded graph $G$ with embedding $\sigma$, denoted \defin{$F(G)$}, is the set of the faces of $\sigma$.  A \defin{plane graph} is a graph embedded on a surface of Euler genus $0$.

\subsection{Face-Weighted Embedded Graphs}

Let $G$ be an embedded graph.  For a vertex $v$ of $G$, we define $\mathdefin{F(G, v)}$ to be the set of faces of $G$ incident to $v$. A \defin{face-weighting} of an embedded graph $G$ is a function $w: A \to \R^+$ where $A \subseteq F(G)$. For any $B \subseteq A$, $w(B)$ denotes $\sum_{f \in B} w(f)$.  As a first step in their proof, \citet{dujmović20253colouringplanargraphs} show that face-weightings can be used to upper bound the treewidth of plane graphs.  In this lemma, the set $Y$ represents a set of faces whose weight is undefined and the lemma requires that weights of the faces incident to a vertex $v$ sum to at least $q$, unless $v$ is incident to a face in $Y$.

\begin{lemma}[\citet{dujmović20253colouringplanargraphs}]\label{planar_tw_by_face_weights}
	Let $N\ge q\ge 1$ be real numbers, let $G$ be a plane graph, let $Y\subseteq F(G)$, and let $w: F(G)\setminus Y\to \R^+$ be a face-weighting such that $w(F(G)\setminus Y) \leq N$ and $F(G, v) \cap Y \neq \emptyset$ or $w(F(G, v) \setminus Y) \geq q$ for each $v \in V(G)$.  Then
	$$\tw(G) \leq 12 \sqrt{|Y| + \frac{N}{q}} + 7 \enspace . $$
\end{lemma}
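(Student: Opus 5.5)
We plan to bound treewidth via the grid minor, using the linear grid-minor theorem for planar graphs: $\tw(G)\le 6\,\gm(G)$ (Robertson--Seymour--Thomas; the constants $12$ and $7$ suggest a bound of the form $\tw\le 6k+O(1)$ with $k\approx 2\sqrt{|Y|+N/q}$). Suppose $G$ has a $\grid_k$ minor with model $\{G_{(i,j)}\}$. It then suffices to show $k\le 2\sqrt{|Y|+N/q}+1$, or something similar. We may assume $G$ is connected, since treewidth is the maximum over components and the hypotheses restrict to components, though face sets change; we simply handle the general case directly.

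The key step is a charging argument. In a grid minor of side $k$, consider the inner subgrid, and for each $i,j$ the $2\times2$ block of branch sets around an inner grid cell. In the plane embedding, the union of the four branch sets $G_{(i,j)},G_{(i+1,j)},G_{(i,j+1)},G_{(i+1,j+1)}$, together with connecting edges, contains a cycle $C_{ij}$ enclosing a disc region. Using the standard fact that the grid's planar embedding is essentially unique (after discarding the outer row and column, it is $3$-connected), the disc regions $D_{ij}$ for different inner cells can be chosen pairwise interior-disjoint, so roughly $(k-3)^2$ disjoint regions result. Choose instead a family of $\Omega(k^2)$ disjoint discs, e.g.\ one per $2\times 2$ block of cells, each containing in its interior a whole branch set $G_{(i,j)}$ whose incident faces all lie inside that disc (faces incident to an interior vertex of the disc lie within the disc's closure). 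Each branch set contains at least one vertex $v$. Either $F(G,v)$ meets $Y$, charging one face of $Y$ to this disc, or $w(F(G,v)\setminus Y)\ge q$, charging weight $q$ to this disc. Since the discs are disjoint and faces inside distinct discs are distinct, summing gives $\#\text{discs}\le |Y|+N/q$, hence $k=O(\sqrt{|Y|+N/q})$.

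The main obstacle is making the disjointness precise: faces incident to vertices in different selected branch sets must be distinct, and each face must be charged at most once. I would select branch sets $G_{(i,j)}$ with $i,j\equiv 0 \pmod 2$ in the interior. For each such branch set, the cycle through its four diagonal-neighbouring blocks' branch sets (the $8$ surrounding branch sets) separates it from the other selected branch sets, so every face incident to $G_{(i,j)}$ lies strictly inside that separating cycle's interior, and these interiors are disjoint across selected sets (with period $2$ they share boundary sets but not interiors). This gives about $((k-2)/2)^2$ selected sets, so $(k-2)^2/4\le |Y|+N/q$, i.e.\ $k\le 2\sqrt{|Y|+N/q}+2$, and $\tw(G)\le 6k\le 12\sqrt{|Y|+N/q}+12$. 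Tightening the additive constant to $7$ would use a sharper grid-minor constant (for example $\tw\le 6\gm-5$ or counting all interior cells), which I would only adjust at the end.
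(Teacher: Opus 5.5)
Your strategy matches the paper's. Its proof of \cref{surface_tw_by_face_weights}, the genus version of this lemma, runs the same argument. Take $r:=\gm(G)$ and select $\lfloor (r-1)/2\rfloor^2$ interior grid vertices, no two on a common grid face. Pick one vertex $v_x$ in each selected branch set. The sets $F(G,v_x)$ are then pairwise disjoint, at most $|Y|$ of them meet $Y$, and each of the others carries weight at least $q$, so $\lfloor (r-1)/2\rfloor^2\le |Y|+N/q$. Your justification of disjointness, via the cycle of $G$ through the eight surrounding branch sets, is correct, and so is the charging.

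The gap is that your proposal does not prove the stated inequality. Put $s:=\sqrt{|Y|+N/q}$.
\begin{itemize}
\item From $\lfloor (r-1)/2\rfloor\le s$ you get $r\le 2s+1$ only when $r$ is odd. For even $r$ you only get $r\le 2s+2$.
\item The Robertson--Seymour--Thomas theorem is stated for the smallest \emph{excluded} grid: a planar graph with no $g\times g$ grid minor has treewidth at most $6g-5$. With $r=\gm(G)$ this gives $\tw(G)\le 6r+1$, not $6r$.
\end{itemize}
For odd $r$ this yields exactly $\tw(G)\le 12s+7$. For even $r$ it yields only $\tw(G)\le 12s+13$, and still $12s+12$ even if one granted your $\tw\le 6\gm$.

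Neither repair you mention closes this.
\begin{itemize}
\item The bound $\tw\le 6\gm-5$ is false: for example $\gm(K_3)=1$ while $\tw(K_3)=2$.
\item Counting all interior cells destroys the disjointness the charging rests on. Grid vertices that share a grid face may have representatives $v_x$ that share a face of $G$, and that face's weight would then be counted more than once.
\end{itemize}

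As written, you have proved $\tw(G)\le 12\sqrt{|Y|+N/q}+13$. Reaching the additive constant $7$ needs a genuinely new ingredient for even $r$, not an adjustment at the end. That could be a sharper planar excluded-grid bound, or more pairwise-disjoint face sets than the $2\times 2$ packing of the $(r-1)\times(r-1)$ array of grid faces provides.

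For this paper's purposes the weaker constant is harmless. The lemma is used only asymptotically: in \cref{lem::reduce_tw}, where constants are absorbed into a sufficiently large $c$, and in the $g=0$ case of \cref{surface_tw_by_face_weights}.
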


We need a generalization of \cref{planar_tw_by_face_weights} for graphs embedded on surfaces of Euler genus $g>0$.  The proof of \cref{planar_tw_by_face_weights} uses the fact that, for any plane graph $G$, $\gm(G)\in \Omega(\tw(G))$.  We need a generalization of this result to graphs embedded on surfaces of Euler genus $g>0$.  
We make use of the following result, which provides precise bounds:\footnote{This appears as \cite[Theorem 4.12]{DBLP:journals/jacm/DemaineFHT05}, which is phrased in terms of branch-width $\bw$ instead of the treewidth. However, for a graph $G$, $\bw(G) - 1\leq \tw(G) \leq \lfloor\frac{3}{2}\bw(G)\rfloor - 1.$}

\begin{theorem}[\citet{DBLP:journals/jacm/DemaineFHT05}]\label{excluded_grid_genus}
	If $G$ is a graph of Euler genus $g$ with treewidth greater than $6(g+1)r - 1$, then $G$ has $\grid_r$ as a minor.
\end{theorem}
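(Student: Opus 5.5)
The plan is to prove the branch-width form of the statement and then convert it. By the inequality $\bw(G)-1\le\tw(G)\le\lfloor\frac32\bw(G)\rfloor-1$ from the footnote, $\tw(G)>6(g+1)r-1$ gives $\frac32\bw(G)>6(g+1)r$, that is, $\bw(G)>4(g+1)r$. So it suffices to show:
\begin{center}
\emph{every graph of Euler genus at most $g$ with $\bw(G)>4(g+1)r$ has a $\grid_r$ minor.}
\end{center}
I would prove this by induction on $g$. Throughout I fix a $2$-cell embedding of $G$ in a surface $\Sigma$ of Euler genus $g$.

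\textbf{Base case $g=0$.} This is the planar excluded-grid theorem in its sharp branch-width form: a plane graph with $\bw(G)>4r$ contains $\grid_r$ as a minor. I would take this from Robertson, Seymour and Thomas. The underlying argument uses the ratcatcher/carving-width duality on the medial graph: if there is no $\grid_r$ minor, one builds a branch-decomposition of width at most $4r$ by cutting along short nooses.

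\textbf{Inductive step $g>0$.} Let $\rho$ be the representativity (face-width) of the embedding, meaning the minimum number of vertices met by a noncontractible noose. I split into two cases.

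\emph{Case $\rho<4r$.} Choose a noncontractible noose $N$ through at most $\rho$ vertices. Delete those vertices and cut $\Sigma$ along $N$, capping the resulting holes with discs. The graph $G'=G-(N\cap V(G))$ then embeds in a surface, or a union of surfaces, each of Euler genus at most $g-1$.
\begin{itemize}
\item Deleting a vertex lowers branch-width by at most one, so $\bw(G')\ge\bw(G)-\rho>4(g+1)r-4r=4gr$.
\item If the cut separates $\Sigma$, the genera of the two pieces add up to at most $g$. The component carrying the branch-width then has genus at most $g-1$, and the bound is still sufficient.
\end{itemize}
By induction, $G'$ has a $\grid_r$ minor. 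Since $G'$ is a subgraph of $G$, so does $G$.

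\emph{Case $\rho\ge4r$.} Here the embedding is locally planar at scale $4r$: every noose meeting fewer than $4r$ vertices is contractible, so it bounds a disc. I would rerun the planar argument on the surface. The aim is to show that if $G$ has no $\grid_r$ minor, then every tangle of order larger than $4(g+1)r$ is ``respectful'' and is controlled by contractible short nooses. The planar carving construction can then be carried out inside discs, and the contribution of the topology is absorbed by a slack of $4gr$. This yields a branch-decomposition of width at most $4(g+1)r$, a contradiction. An alternative route is to use that large representativity gives a family of disjoint homotopic cycles, as in Graph Minors VII, and to extract a planar piece of branch-width greater than $4r$. However, that route typically loses a non-linear factor, so I would aim for the tangle/noose approach.

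\textbf{Main obstacle.} I expect the high-representativity case to be the hard step. It needs a genuine surface version of the ratcatcher duality, and the constant must remain linear in $g$ rather than becoming some worse function of $g$. The low-representativity case and the conversion between branch-width and treewidth are routine.
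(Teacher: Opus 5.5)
Your conversion from treewidth to branch-width is exactly what the paper does. The paper does not prove this statement itself: it imports the branch-width form ($\bw(G)>4(g+1)r$ implies a $\grid_r$ minor) from Demaine et al.\ and converts it using $\tw(G)\le\lfloor\frac32\bw(G)\rfloor-1$, as in your first paragraph. The rest of your skeleton is also sound: the induction on $g$, the planar base case from Robertson--Seymour--Thomas, and the low-representativity step. Deleting the fewer than $4r$ vertices of a short noncontractible noose costs less than $4r$ in branch-width and leaves each piece on a surface of Euler genus at most $g-1$.

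The genuine gap is the case $\rho\ge 4r$. There you give no argument, only a program: a surface version of ratcatcher duality, ``respectful'' tangles, and topology ``absorbed by a slack of $4gr$''. You yourself flag it as the hard step, and nothing in the proposal shows it can be carried out, let alone with a constant linear in $g$.

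The missing idea is that this case needs no branch-width argument at all. If a graph is $2$-cell embedded on a surface of positive Euler genus and every noncontractible noose has length at least $4r$, then it already has $\grid_r$ as a minor. This is precisely \cref{cor::shortest_noncontractible_noose}, stated in the paper and going back to Robertson and Seymour's Graph Minors work on surfaces. You dismissed the representativity route as losing a non-linear factor, and that is what led you astray: the face-width-to-grid bound is linear in $r$.

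With that fact, the inductive step is a simple dichotomy. Either $\rho\ge 4r$ and you are done immediately, or $\rho<4r$ and you cut and recurse. The arithmetic is then $4r$ for the planar base case plus less than $4r$ for each genus-reducing cut, which gives the threshold $4(g+1)r$ and, after your conversion, $6(g+1)r-1$ for treewidth.
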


Note that, since $\grid_r$ has $r^2$ vertices, any graph that has a $\grid_r$-minor has at least $r^2$ vertices.  That is, $\gm(G)\le \sqrt{n}$ for any $n$-vertex graph $G$. The contrapositive of \cref{excluded_grid_genus} states that $\tw(G)\le 6(g+1)\gm(G)$ for Every genus-$g$ graph $G$.  Combining these two inequality implies that $\tw(G)\le 6(g+1)\gm(G)\le 6(g+1)\sqrt{n}$ for every $n$-vertex graph of genus at most $g$.  Thus, for every $g\ge 1$, every $n$-vertex genus-$g$ graph has $\tw(G)\in O_g(\sqrt{n})$.  

\Cref{excluded_grid_genus} is still not enough.  In a plane embedding of $\grid_r$, for $r\ge 3$, each of the $4$-cycles of $\grid_r$ bounds a distinct face of $\grid_r$. This is not necessarily true for graphs embedded on surfaces of Euler-genus $g\ge 1$.  The following result shows that it is true ``often enough'' for our purposes:

\begin{lemma}[\citet{GEELEN2004785}]\label{galeen_and_elt}
	Let $r\ge 3$, let $\Sigma$ be a surface of Euler-genus $g$, and let $G=\grid_r$ be embedded in $\Sigma$. Then the  number of non-contractible cycles of length four in $G$ is at most $9g$.
\end{lemma}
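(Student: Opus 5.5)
The plan is an Euler-formula counting argument. Write $s:=(r-1)^2$ for the number of unit squares of $\grid_r$. For $r\ge 3$ these are exactly the $4$-cycles of $\grid_r$: the grid is bipartite, and any $4$-cycle uses two horizontal and two vertical edges forming a unit square. The key topological input is that a cycle bounding a face of a $2$-cell embedding is contractible, since it bounds a disc. So every non-contractible $4$-cycle is a unit square that does \emph{not} bound a face. Let $k$ be the number of non-facial unit squares. It suffices to show $k\le 9g$, since that already bounds the non-contractible ones.

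We may assume the embedding is $2$-cell. Otherwise we re-embed the component in a surface of no larger Euler genus and use $|V|-|E|+|F|\ge 2-g$, which holds in general. We then compare two bounds on the number of faces. Euler's formula gives a lower bound $|F|\ge 2-g-|V|+|E|$. Counting edge-sides gives an upper bound: each edge has two sides, every facial unit square uses four, and every other face has a boundary walk of length at least $6$. That last bound holds because walks are even in a bipartite graph, and a length-$4$ walk that is not a $4$-cycle would bound a face lying on a tree-like subgraph, which cannot happen in a $2$-cell embedding of a $2$-connected graph.

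Done naively on $\grid_r$ itself, this yields only $k\le 2r-5+3g$. The loss comes from the $4(r-1)$ edge-sides on the outer boundary, which are "wasted" if the outer face is counted as having length $\ge 6$ rather than one huge face. To kill this perimeter term, I would first contract the boundary cycle, minus one edge so that we contract a path, into a single vertex $z$. Contraction of a connected set preserves embeddability in $\Sigma$, so the resulting multigraph $G'$ still has Euler genus at most $g$. The combinatorics of $G'$ are as follows:
\begin{itemize}
\item It has $(r-2)^2+1$ vertices and $2(r-2)(r-3)+4(r-2)$ edges.
\item The $(r-3)^2$ interior unit squares remain $4$-cycles.
\item The $4(r-3)$ squares with one boundary edge become triangles through $z$.
\item The four corner squares become digons at $z$.
\end{itemize}
A unit square that was facial in $\grid_r$ yields a facial cycle (or a degenerate facial walk) in $G'$. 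Contractibility is preserved as well: a non-contractible square not meeting the boundary stays a non-contractible $4$-cycle. Squares meeting the boundary need separate handling, but they are covered by a symmetric argument or by a direct weighted count.

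Now redo the side-counting in $G'$ with weights tailored to face lengths $2$, $3$, $4$ and $\ge 6$. With the perimeter collapsed, the total side excess is only $O(k)$ plus terms that cancel exactly against the Euler lower bound. This should give $k\le c\,g$ for an absolute constant $c$, and the goal is to tune it to $c=9$, possibly by applying the count separately to interior squares and to boundary squares.

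The main obstacle is this bookkeeping step. We must control faces of $G'$ of length $<6$ that are not images of unit squares; multigraph digons and triangles at $z$ could in principle be facial for other reasons. We must also ensure the perimeter really contributes only $O(1)$ wasted sides. If the contraction trick does not close cleanly, the first fallback is a discharging argument: each non-facial square forces a face of length $\ge 6$ through one of its edges, and the excess of such faces is charged against $2-|V|+|E|-|F|\le g$.
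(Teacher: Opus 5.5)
The paper does not prove this lemma; it quotes it from Geelen, Richter and Salazar. Judged on its own, your proposal has a genuine gap exactly at the step you call bookkeeping. Nothing in it removes the dependence on $r$, and face-length counting cannot do this by itself. For a cellular embedding in $\Sigma$, Euler's formula gives two facts: the number $F_o$ of faces not bounded by a unit square equals $1+k-g$, and the total excess $\sum_f(\ell(f)-4)$ over these faces is $4(r-2)+4g$, \emph{independently of $k$}. So bounding $k$ is the same as showing that the perimeter excess $4(r-2)$ sits in only $O(g)$ faces. The only local fact you use, that non-square faces have length at least $6$, is consistent with $k$ of order $r$, which is why you get $k\le 2r-5+3g$.

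Contracting the border path does not fix this; it only relocates it. Border edge-sides are deleted from the faces containing them, so a face running along the border has length at most $4$ in $G'$ (for instance, one bounded by a domino on the bottom side), and less at a corner. Write $k_I,k_S,k_C$ for the non-facial squares with $0,1,2$ border edges. In $G'$ the non-square faces still number $1+k-g$ and have total length $4k_I+3k_S+2k_C+O(1)$. With only $\ell\ge 4$ this gives $k_S+2k_C\le 4g+O(1)$ and says nothing about $k_I$. To control $k_I$ you need all but $O(g)$ of these faces to have length at least $5$. That is precisely the unproved claim that faces using several border edges are rare, or are paid for. It is a structural statement about which closed walks can simultaneously be faces, and you give no argument for it, nor for the squares meeting the border. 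The discharging fallback fails for the same reason: the excess to be charged against $g$ is $4(r-2)+4g$, not $O(g)$. Separately, you may not reduce to cellular embeddings by re-embedding, because contractibility refers to the given embedding. In a non-cellular embedding a non-contractible square can even bound a face, for example a punctured torus with genus also on the other side.

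The missing ingredient is topological independence rather than a face count, and it also explains the constant $9=3^2$. Split the unit squares into $9$ classes by the residues modulo $3$ of the coordinates of their lower-left corners. Within one class:
\begin{itemize}
\item two squares are vertex-disjoint with no edge between them;
\item each square has an edge leaving it (since $r\ge 3$);
\item deleting all their vertices leaves a connected graph, because the rows and columns in the third residue class survive.
\end{itemize}
Let $Q_1,\dots,Q_m$ be the non-contractible squares of one class, and cut $\Sigma$ along them. The rest of the grid lies in one component $R_0$. This component receives a boundary circle from every $Q_i$, so $\chi(R_0)\le 2-m$. Every other component contains no vertex or edge of the grid and is bounded by sides of squares. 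It therefore is not a disc, since that square would then be contractible, and so has $\chi\le 0$. Cutting along circles preserves Euler characteristic, hence $2-g\le\chi(R_0)\le 2-m$, that is, $m\le g$. Summing over the $9$ classes gives $9g$, and this argument needs no cellularity assumption.
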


We can now prove the generalization of \cref{planar_tw_by_face_weights} that we need:
\begin{lemma}\label{surface_tw_by_face_weights}
	Let $G$ be a graph embedded on a surface $\Sigma$ of Euler genus $g$, let $Y\subseteq F(G)$ be a set of faces in $G$, and let $w:F(G)\setminus Y\to \mathbb{R}^+$ be a face-weighting of $G$ with $w(F(G)\setminus Y)\le N$ and such that, for each $v\in V(G)$, either $F(G,v)\cap Y \neq \emptyset$ or $\sum_{f\in F(G,v)} w(f)>q$. Then $\tw(G)\in O_g(\sqrt{N/q+|Y|})$.
\end{lemma}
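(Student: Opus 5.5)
Set $t:=\tw(G)$ and $r:=\lfloor (t+1)/(6(g+1))\rfloor$. By \cref{excluded_grid_genus}, $G$ contains $\grid_r$ as a minor (assuming $r\ge 3$, otherwise the bound is trivial). The plan is to show $r^2=O_g(N/q+|Y|)$, which gives $t=O_g(\sqrt{N/q+|Y|})$. Fix a model $\{G_x:x\in V(\grid_r)\}$ and contract each branch set, deleting the rest, to get an embedded minor $M$ of $G$ that contains an embedded $\grid_r$ on $\Sigma$ (the embedding is inherited). The key structure is the $(r-1)^2$ four-cycles of $\grid_r$. By \cref{galeen_and_elt}, at most $9g$ of them are non-contractible. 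Each contractible one bounds a disc in $\Sigma$. For $r\ge 3$ and large enough relative to $g$, we can discard $O(g)$ exceptional 4-cycles, and also those whose disc is the ``big side''. That leaves $\Omega(r^2)-O(g)$ 4-cycles bounding pairwise interior-disjoint discs $D_C$. To keep this step easy, I would pass to a subgrid: take $\Omega(r^2)$ disjoint $3\times 3$ subgrids away from the $O(g)$ bad cycles, and use the fact that a contractible 4-cycle in a grid bounds a disc that is a face-like cell.

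Next, assign each surviving cell $C$ to faces of $G$. The disc $D_C$ is a union of faces of $G$ together with parts of $G$. I would pick, inside each cell, a vertex $v_C$ of $G$ lying in a branch set of a corner of $C$, or better in the interior of the disc, such that all faces of $G$ incident to $v_C$ lie in the closure of $D_C\cup$ (neighbouring cells). Using $3\times3$ blocks, the central vertex's branch set $G_x$ is surrounded by the four incident cells. Every face of $G$ incident to a vertex of $G_x$ then lies inside the union $U_x$ of those four cell-discs, which is a disc. The blocks are chosen with disjoint such unions, so the face sets $F(G,v)$ for the chosen vertices $v\in V(G_x)$ are pairwise disjoint across blocks.

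Then count. For each chosen block $x$ and any $v\in V(G_x)$, either $F(G,v)$ meets $Y$, or $w(F(G,v))>q$. The blocks of the first kind are at most $|Y|$, since each face of $Y$ lies in at most one region $U_x$. Those of the second kind each carry weight greater than $q$ in disjoint face sets, so there are fewer than $N/q$ of them. Hence $\Omega(r^2)-O(g)\le |Y|+N/q$, giving $r=O_g(\sqrt{N/q+|Y|}+1)$ and therefore $t\le 6(g+1)(r+1)=O_g(\sqrt{N/q+|Y|})$. The additive constant is absorbed because $N/q$ is presumably at least $1$; otherwise $V(G)=\emptyset$ or $|Y|\ge1$.

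The main obstacle is the topological step: showing that the region enclosed by the contractible 4-cycles around a grid vertex really contains all faces of $G$ incident to the branch-set vertex, and that these regions are disjoint. On a surface, a contractible cycle bounds a disc, but we must identify which side is the disc and rule out nesting. I would handle this with \cref{galeen_and_elt} plus a counting argument: only $O(g)$ cells can be non-contractible or have their disc containing other grid vertices. Equivalently, cut $\Sigma$ along the $O(g)$ bad cycles and work in the resulting planar-like region, mimicking the proof of \cref{planar_tw_by_face_weights}.
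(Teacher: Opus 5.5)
Your plan is essentially the paper's proof. Both take a $\grid_r$-minor from \cref{excluded_grid_genus}, discard the at most $9g$ non-contractible $4$-cycles via \cref{galeen_and_elt}, pick a sparse set of grid vertices whose surrounding cells are discs, and count over the pairwise disjoint sets $F(G,v_x)$ to get $r^2=O(N/q+g+|Y|)$. The paper takes $\lfloor (r-1)/2\rfloor^2$ degree-$4$ vertices sharing no grid face; you take centres of disjoint $3\times 3$ blocks. One small slip: take $r:=\lfloor t/(6(g+1))\rfloor$ rather than $\lfloor (t+1)/(6(g+1))\rfloor$, so that the strict inequality in \cref{excluded_grid_genus} holds.

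The topological step you flag is only asserted in the paper, and it closes as you suggest. Since $\grid_r-V(C)$ is connected, the rest of the model lies entirely inside or entirely outside the disc bounded by a contractible $G_C$. For $g\ge 1$ the ``inside'' case (your ``big side'' case, which the paper's assertion glosses over) happens for at most one $C$, and the discs of all other contractible $4$-cycles are then pairwise disjoint.
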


\begin{proof}
	If $g=0$, then the lemma is already implied by \cref{planar_tw_by_face_weights}, so we may assume that $g\ge 1$.  In particular, this implies that any contractible loop in $\Sigma$ is the boundary of exactly one disc in $\Sigma$. We may assume that $\gm(G)\ge 3$ since, otherwise $\tw(G)\le 18g+17\in O_g(\sqrt{N/q+|Y|})$ since $N/g\ge 1$.  
    
    Let $r:=\gm(G)$ and let $\calM:=\{G_x:x\in V(\grid_r)\}$.  Without loss of generality, we may assume that each branch set of $\mathcal{M}$ is a tree, since each branch set $G_x$ can be replaced with a spanning tree of $G_x$.  For each edge $xy$ of $\grid_r$, choose an edge $G_{vw}$ of $G$ with $v\in V(G_x)$ and $w\in V(G_y)$. Let $M:=\bigcup_{x\in V(\grid_r)}G_x \cup \bigcup_{e\in E(\grid_r)}G_e$.	Then each cycle $x_0,\ldots,x_c$ in $\grid_r$ corresponds to a cycle $G_{C}:=P_0,G_{x_0x_1},P_1,G_{x_1x_2},\ldots,P_c,G_{x_cx_0}$ in $M$, where $P_i$ is the unique path in $G_{x_i}$ with endpoints $V(G_{x_{i-1}x_i})\cap V(G_{x_i})$ and $V(G_{x_ix_{i+1}})\cap V(G_{x_i})$ (with subscripts taken modulo $c+1$). 

	Say that a $4$-cycle $C$ in $\grid_r$ is \defin{genus-bad} if $G_C$ is non-contractible and is \defin{genus-good} otherwise.  By \cref{galeen_and_elt}, at most $9g$ $4$-cycles in $\grid_r$ are genus-bad.  Note that, for any two distinct genus-good $4$-cycles $C_1$ and $C_2$ in $\grid_r$, the open discs in $\Sigma$ with boundary $G_{C_1}$ and $G_{C_2}$ are disjoint. Let $Z$ be a set of $\lfloor (r-1)/2\rfloor^2$ degree-$4$ vertices of $\grid_r$ that are not incident to any common faces of $\grid_r$. Say that a vertex $x \in Z$ is \emph{genus-bad} if any of the four $4$-cycles incident to $x$ in $\grid_r$ are genus-bad, and $x$ is \defin{genus-good} otherwise. Since there are at most $9g$ genus-bad $4$-cycles in $\grid_r$ and the vertices in $Z$ do not share any incident $4$-cycles, at most $9g$ vertices in $Z$ are genus-bad.

	For each genus-good node $x$ in $Z$, select an arbitrary vertex $v_x$ in $G_x$. Observe that, for each face $f$ of $G$ incident to $v_x$, $\grid_r$ has a (genus-good) $4$-cycle $C$ incident to $x$ such that the disc in $\Sigma$ whose boundary is $G_C$ contains $f$.  Say that a genus-good vertex $x \in Z$ is \defin{$Y$-bad} if $F(G,v_x)$ contains a face in $Y$, and say that $x$ is \defin{$Y$-good} otherwise. Because the sets $F(G,v_x)$ are pairwise disjoint for all genus-good vertices in $Z$, at most $|Y|$ genus-good vertices in $Z$ are $Y$-bad. Let $Z'$ be the set of vertices in $Z$ that are genus-good and $Y$-good. Then $|Z'| \ge \lfloor (r-1)/2\rfloor^2 - 9g - |Y|$. 

	For each $x\in Z'$, $F(G,v_x)\cap Y = \emptyset$, so $\sum_{f\in F(G,v_x)} w(f) > q$ for each $x\in Z'$. By the choice of $Z'$, $F(\grid_r,x)$ and $F(\grid_r,y)$ are disjoint for each distinct $x,y\in Z'$. Therefore, $F(G,v_x)$ and $F(G,v_y)$ are disjoint for distinct $x,y\in Z'$. Thus,
	\[
		N
		 \ge \sum_{x\in Z'} \sum_{f\in F(G,v_x)} w(f)
		 > |Z'|q
		 \ge (\lfloor (r-1)/2\rfloor^2-9g-|Y|)q \enspace .
	\]
	Rewriting this inequality gives $r< 2\sqrt{N/q+9g+|Y|}$. By \cref{excluded_grid_genus}, $\tw(G)\le 6(g+1)r-1\le 6(g+1)\left( 2\sqrt{N/q+9g+|Y|}\right)-1$, so $\tw(G)\le O(g\sqrt{N/q+g+|Y|})$.
\end{proof}

\subsection{Planarizers}

Let $\Sigma$ be a surface.
A loop $C$ is \defin{contractible} in $\Sigma$ if it is null homotopic in $\Sigma$; that is, we can continuously deform $C$ in $\Sigma$ to a point. Otherwise, we say $C$ is \defin{non-contractible}. For a 2-cell embedded graph $G$ on $\Sigma$, a \defin{noose} is a loop that intersects the embedding only at the vertices of $G$. The \defin{length} of a noose is the number of vertices it intersects. 

\begin{theorem}[\citet{DBLP:journals/jacm/DemaineFHT05}]\label{cor::shortest_noncontractible_noose}
	Let $r$ be a positive integer. If a graph $G$ is $2$-cell embedded on a surface $\Sigma$ of Euler genus greater than $0$ in such a way that every non-contractible noose has length at least $4r$, then $\grid_r$ is a minor of $G$.
\end{theorem}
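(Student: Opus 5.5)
This result is quoted from \citet{DBLP:journals/jacm/DemaineFHT05}; the plan below is only a sketch of how I would reprove it, and its key Menger step is not checked. The idea is to build the grid from $r$ nested disjoint cycles (the ``rows'') together with $r$ disjoint paths crossing all of them (the ``columns''), entirely inside a disc of $\Sigma$. Contracting the resulting cylindrical wall then gives $\grid_r$.

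The first step is to work in the radial graph $R(G)$, the bipartite vertex--face incidence graph of the $2$-cell embedding. A noose of length $\ell$ corresponds to a closed walk of length $2\ell$ in $R(G)$, so the hypothesis says that every non-contractible closed walk in $R(G)$ has length at least $8r$. Fix a face $f_0$ and let $L_i$ be the set of elements of $R(G)$ at radial distance exactly $i$ from $f_0$. The first claim to prove is that, for $i<4r$, the union $D_i$ of the closed faces at distance at most $i$ is an embedded closed disc. If it were not, then some non-contractible loop (or the closing-up of two geodesics from $f_0$ meeting at a common element) would stay inside $D_i$. This yields a non-contractible closed radial walk of length at most $2i+2<8r$, i.e. a non-contractible noose of length less than $4r$, a contradiction. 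Consequently, for each $j\le r$ the boundary of $D_{4j}$ (or of $D_{4j-1}$, taken on the vertex side) contains a cycle $C_j$ of $G$. These cycles are pairwise vertex-disjoint and nested around $f_0$. They are contractible, since each bounds a disc, and they are non-degenerate, because $\Sigma\ne\mathbb S^2$ forces $D_{4r}\ne\Sigma$.

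The second step supplies the columns. I want $r$ vertex-disjoint paths in $G\cap D_{4r}$ from $V(C_1)$ to $V(C_r)$. By Menger's theorem it suffices to show that no set $X$ of fewer than $r$ vertices separates $C_1$ from $C_r$ inside the annulus $D_{4r}\setminus \mathrm{int}(D_4)$. In a planar annulus, a minimal separator $X$ is realised by a noose $\nu$ through $X$ that encircles $f_0$, and $\nu$ has length $|X|<r$.

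This is the step I expect to be the main obstacle. A short contractible noose encircling $f_0$ is not directly forbidden by the hypothesis, and the radial layers could ``pinch''. My first attempt would be to choose $f_0$ and the layers more cleverly. Take a shortest non-contractible noose $N$ (length $k\ge 4r$), cut $\Sigma$ along it, and measure distances from the cut rather than from a single face. Then any short separating noose, concatenated with a subpath of $N$, would produce a non-contractible noose shorter than $N$. This is an exchange argument of the kind used for face-width by Robertson--Seymour and Brunet--Mohar--Richter. Alternatively, I would pick the cycles $C_j$ as tight (``shortest'') cycles in their homotopy class within the annulus, so that a small separator would contradict either this minimality or the length bound. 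Once the $r$ disjoint crossing paths $P_1,\dots,P_r$ exist, they can be re-routed so that each meets every $C_j$ in a single subpath.

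The last step is to contract each $P_i\cap C_j$ region to a single vertex. This gives a cylindrical $r\times r$ grid, with $r$ concentric cycles crossed by $r$ radial paths, and deleting one column segment yields $\grid_r$ as a minor of $G$. The constant $4$ in the hypothesis is what provides room both for the $r$ nested layers and for the doubling $2\ell$ between noose length and radial walk length.
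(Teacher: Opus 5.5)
The paper does not prove this statement; it is quoted from Demaine, Fomin, Hajiaghayi and Thilikos. Your sketch therefore has to stand on its own, and it has a genuine gap at the step you flag. With $f_0$ an arbitrary face, the Menger claim is not just unverified but false. Here is a counterexample.

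\begin{itemize}
\item Take a toroidal grid $G_1=C_m\times C_m$ with $m\ge 4r$, a face $\phi$ of $G_1$, and a vertex $v$ on $\phi$.
\item Inside $\phi$, draw nested cycles $Z_1,\dots,Z_k$ with $k\gg r$ and $Z_1$ innermost. Join $Z_k$ to $v$, and each $Z_j$ to $Z_{j+1}$, by a single edge.
\item The embedding is still $2$-cell.
\item Every non-contractible noose can be rerouted inside the closed disc $\overline{\phi}$ so that it avoids the new vertices without becoming longer. So the hypothesis still holds.
\end{itemize}

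If $f_0$ is the face inside $Z_1$, your layers produce rows $C_j$ that are among the $Z_i$, and a single vertex separates $C_1$ from $C_r$. As you suspected, the hypothesis says nothing about short nooses encircling $f_0$, because they are contractible. What is missing is a mechanism that turns such a noose into a short non-contractible one. This forces a global choice of $f_0$ rather than an arbitrary one.

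Your exchange idea can supply this mechanism once you fix the two choices you left open. Take $f_0$ to be a face through which a shortest non-contractible noose $N$ passes. Use only rows at radial distance between roughly $r$ and $3r$ from $f_0$, which fits, up to additive constants, below the range $i\le 4r-2$ that you actually control.

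Suppose a noose $\nu$ through fewer than $r$ vertices separates the innermost used row from the outermost one. Then $\nu$ bounds a disc containing $f_0$. The maximal arc of $N$ through $f_0$ inside this disc must cross the innermost row on the way in and again on the way out, so it meets at least about $r$ vertices. Replace that arc by the shorter of the two arcs of $\nu$ between the same endpoints; this is a homotopy inside the disc. The shorter arc has fewer than $r/2+1$ vertices. So you obtain a non-contractible closed curve, and hence a non-contractible noose, shorter than $N$, which is a contradiction.

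Three smaller points also need repair.
\begin{itemize}
\item $D_i$ need not be a disc, even locally: it can have holes and pinch vertices. Your radial-walk argument only shows that every loop in $D_i$ is contractible. You must fill the holes and take the boundary cycle around $f_0$.
\item A non-contractible loop in $D_i$ only yields a noose of length at most $i+1$. So you control $i\le 4r-2$, and $D_{4r}$ is out of range.
\item You assert without proof that the crossing paths can be rerouted to meet each $C_j$ in a single subpath. This is not automatic and needs its own argument.
\end{itemize}
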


\defin{Cutting} a surface $\Sigma$ along a loop $C$ is equivalent to the operation $\Sigma \setminus C$, that is, removing the points of $C$ from $\Sigma$. If the loop $C$ is contractible, the cutting operation produces an open disk $D$, and a surface $\Sigma'$, each with boundary equal to $\partial D$. Otherwise, $C$ is non-contractible, and cutting along $C$ produces one or two surfaces each of genus smaller than that of $\Sigma$. Precisely, if $C$ is one-sided, we obtain one surface, otherwise it is two-sided and we obtain at most two surfaces. \defin{Slicing} a surface $\Sigma$ along a loop $C$ is equivalent to cutting along $C$, then capping each hole by a closed disk; by capping we mean identifying the boundary of the hole to the boundary of a disk. When slicing along a non-contractible loop $C$ in a surface $\Sigma$, there are three possibilities:
\begin{itemize}[nosep,nolistsep]
	\item If $C$ is one-sided then we obtain one surface $\Sigma_1$ with $\eps(\Sigma_1)=\eps(\Sigma)-1$.
	\item If $C$ is two-sided and non-surface-separating then we obtain one surface $\Sigma_1$ with $\eps(\Sigma_1)=\eps(\Sigma)-2$ such that $\Sigma\setminus \Sigma_1$ consists of two discs.
	\item If $C$ is two-sided and surface-separating, then we obtain two surfaces $\Sigma_1$ and $\Sigma_2$ such that $\eps(\Sigma_1)+\eps(\Sigma_2)=\eps(\Sigma)$ and  $\Sigma_i\setminus \Sigma$ is a disc, for each $i\in\{1,2\}$.
\end{itemize}

A \defin{planarizer} $P$ of a graph $G$ is a set of vertices of $G$ such that $G\setminus P$ is a planar graph.
In the next lemma, we show that a graph $G$ that has an embedding $\sigma$ on a surface of genus $g$ has a planarizer whose size is $O(g\cdot\tw(G))$.  The proof proceeds by repeated applications of \cref{cor::shortest_noncontractible_noose}.  Although there are a number of similar proofs in the literature, we provide a detailed proof here that also shows the existence of a plane embedding of $G-P$ that has a small number of faces that are not faces of $G$.  This is important in subsequent applications of this lemma because these faces will become the set $Y$ in applications of \cref{surface_tw_by_face_weights}.

\begin{lemma}\label{planarizer_tw}
	Let $G$ be a graph and let $\sigma$ be an embedding of $G$ on a surface $\Sigma$ of Euler genus $g\ge 1$.  Then there exists a planarizer $P$ of $G$ of size at most $4(2g-1)(\tw(G)+1)$ and an embedding $\sigma'$ of $G-P$ onto a surface $\Sigma'$ of genus $0$ such that $\sigma'$ has at most $2g-1$ faces that are not faces of $\sigma$.
\end{lemma}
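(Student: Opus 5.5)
We argue by repeatedly cutting along short non-contractible nooses. The basic step is as follows. Let $G$ be $2$-cell embedded on a surface of Euler genus $g\ge 1$ with $t:=\tw(G)$. The grid $\grid_r$ cannot be a minor of $G$ for any $r>t$, since $\tw(\grid_r)=r$ and treewidth is minor-monotone. Taking $r:=t+1$ in the contrapositive of \cref{cor::shortest_noncontractible_noose}, we obtain a non-contractible noose $C$ of length less than $4(t+1)$. Let $X:=V(G)\cap C$, so $|X|\le 4(t+1)$. Slice $\Sigma$ along $C$ and delete $X$. The noose meets the embedding only at vertices in $X$, so $G-X$ inherits an embedding in the sliced surface. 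By the case analysis above, this surface either has Euler genus at most $g-1$ (one-sided or non-separating $C$), or splits into two surfaces $\Sigma_1,\Sigma_2$ with $\eps(\Sigma_1)+\eps(\Sigma_2)=g$, each of genus less than $g$ since $C$ is non-contractible. If the new embedding is not $2$-cell, we may pass to a $2$-cell embedding of each component on a surface of no larger genus, as in the footnote. We must check that this re-embedding does not create extra new faces; if it does, we instead argue directly with the embedding inherited from the slice. Throughout, we use that $\tw(G-X)\le\tw(G)$, so every later noose also has length at most $4(t+1)$.

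Next we control the faces. Faces of the new embedding that avoid the caps are faces of $\sigma$, because they lie in $\Sigma\setminus C$ and contain no deleted vertices or edges. The non-old faces are the faces containing a capping disc, and there are at most two per slice. Each subsequent slice creates at most two more non-old faces, while old non-old faces either remain or are merged; merging never increases the count relative to the number of caps.

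The main obstacle is the accounting, which must yield $2g-1$ slices and at most $2g-1$ new faces, rather than the naive $2g$ slices and $2\cdot 2g$ new faces. We induct on $g$, with the claim that a surface of genus $g\ge 1$ requires at most $2g-1$ slices (each costing at most $4(t+1)$ vertices) and yields at most $2g-1$ new faces in the final plane embedding. For the one-sided case (genus drops by one, with a single cap and hence one new face), we get $1+(2(g-1)-1)$ when $g\ge2$, or $1$ when $g=1$. For the non-separating two-sided case (genus drops by two, two caps), we get $2+(2(g-2)-1)\le 2g-1$ when $g\ge3$, and $2\le 3$ when $g=2$. For the separating case with $g_1+g_2=g$ and $g_i\ge1$, we get $1+(2g_1-1)+(2g_2-1)=2g-1$ slices, and at most $2+(2g_1-1)+(2g_2-1)=2g$ new faces. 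To avoid overcounting here, we note that each of $\Sigma_1,\Sigma_2$ receives exactly one cap. Then a sharper invariant works: the cap face on a surface that is further sliced is counted within that surface's own bound only if it survives. If needed, we strengthen the induction to the statement that there are at most $2g-1$ faces that are neither faces of $\sigma$ nor the single incoming cap, and conclude by combining.

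Finally, if the remaining graph ends up embedded in several spheres (from separating slices), we take the disjoint union and place all components in one sphere. Placing a component inside a face of another merges two faces into one. We do this placement inside a face that is already non-old, so the number of non-old faces does not increase. The result is the planarizer $P$, the union of all the sets $X$, with $|P|\le 4(2g-1)(\tw(G)+1)$, together with the embedding $\sigma'$.
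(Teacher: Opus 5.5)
Your framework is the paper's: slice along a shortest non-contractible noose of length at most $4(\tw(G)+1)$, induct on $g$, and use $2g-1$ slices. Your planarizer bound is correct, and so are the face counts in the one-sided and non-separating cases. The gap is the face count in the separating case.

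You correctly reach the naive bound $2+(2g_1-1)+(2g_2-1)=2g$, but neither proposed repair lowers it.

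\begin{itemize}
\item Applying the lemma to $(G_i,\sigma_i)$ gives at most $2g_i-1$ faces of $G_i-P_i$ that are not faces of $\sigma_i$. The only face of $\sigma_i$ that is not a face of $\sigma$ is the cap face $f_i\supseteq D_i$. So this already says ``at most $2g_i-1$ non-old faces other than the one containing $D_i$''; your ``strengthened'' induction adds nothing.
\item Your ``sharper invariant'' is also reversed: $f_i$ is excluded from the inductive bound precisely when it \emph{survives}.
\item Nothing prevents the face containing $D_i$ from being an extra non-old face on each side, for instance if the later nooses on $\Sigma_i$ stay away from $f_i$. So before combining you can have $2g$ non-old faces.
\item Your final step only guarantees that the count does not increase. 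If the guest component's outer face is an old face, placing it inside a non-old face of the host turns one old and one non-old face into one non-old face, and you still have $2g$.
\end{itemize}

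The missing idea is that the combination must merge exactly the two cap faces of each separating slice. Remove $D_1$ from $\Sigma_1'$ and $D_2$ from $\Sigma_2'$ and glue along $\mathcal{N}$. Equivalently, put $G_2-P_2$ inside the face of $G_1-P_1$ containing $D_1$, with the face containing $D_2$ as its outer face. Two non-old faces then become one, giving $(2g_1-1)+(2g_2-1)+1=2g-1$. This is what the paper does inside the induction. Your observation that each side receives exactly one cap is the right starting point, but it has to be used this way.

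A secondary issue concerns non-$2$-cell embeddings. Replacing one by an arbitrary $2$-cell embedding ``as in the footnote'' can destroy any correspondence with the faces of $\sigma$, and your hedge leaves this unresolved. The paper instead slices along a non-contractible loop lying inside a non-disc face, i.e., a noose of length $0$, within the same induction. The resulting caps are then charged like any others.
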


\begin{proof}
	We proceed by induction on $g$.  The two base cases $g=1$ and $g=2$ are handled below.

	If $\sigma$ is not a $2$-cell embedding, then some face of $\sigma$ is not homeomorphic to an open disk and thus contains a non-contractible loop in $\Sigma$. Such a loop is a non-contractible noose of length $0$. Otherwise, $\sigma$ is a $2$-cell embedding. In either case, let $\calN$ be the shortest non-contractible noose in $G$ with respect to $\sigma$ and let $P_0:=\calN\cap V(G)$. If $\sigma$ is a $2$-cell embedding, \cref{cor::shortest_noncontractible_noose} implies $|P_0|\le 4(\tw(G)+1)$; otherwise $|P_0|=0 \le 4(\tw(G)+1)$.  We distinguish several cases:
	\begin{itemize}
		\item If $\calN$ is one-sided, then slicing $\Sigma$ along $\calN$ produces one surface $\Sigma_1\supseteq\Sigma$ of genus $g-1$.  Then $\Sigma_1\setminus\Sigma$ is a disc $D_1$ bounded by $\calN$. Let $G_1:=G-P_0$.  The restriction $\sigma_1$ of $\sigma$ to $G_1$ is an embedding of $G_1$ on $\Sigma_1$ in which $F(G_1)\setminus F(G)=\{f_1\}$, where $f_1$ is the face of $G_1$ that contains $D_1$.  If $g=1$, then $P:=P_0$, $\sigma':=\sigma_1$, and $\Sigma':=\Sigma_1$ satisfies the requirements of the lemma.  Assume $g>1$.  
		
		Since $\tw(G_1)\le \tw(G)$, we can apply induction to $(G_1,\sigma_1,\Sigma_1)$ to obtain a planarizer $P_1$ of $G_1$ of size at most $4(2(g-1)-1)(\tw(G)+1)$ and an embedding $\sigma_1'$ of $G_1-P_1$ onto a surface $\Sigma_1'$ of genus $0$ such that $|F(G_1-P_1)\setminus F(G_1)|\le 2(g-1)-1$.
		Then $P:=P_0\cup P_1$ has size at most $|P_0|+|P_1|\le 4(2g-2)(\tw(G)+1)$, $\sigma':=\sigma_1'$ is an embedding of $G-P$ onto $\Sigma':=\Sigma_1'$ and $F(G-P)\subseteq (F(G_1-P_1)\setminus F(G_1))\cup\{f_1\}$, so $|F(G-P)\setminus F(G)|\le (2(g-1)-1)+1=2g-2$. 
			
		\item If $\calN$ is two-sided and non-surface separating, then slicing $\Sigma$ along $\calN$ produces one surface $\Sigma_1$ of genus $g-2$. Then $\Sigma_1\setminus\Sigma=D_1\cup D_2$ where $D_1$ and $D_2$ are disjoint discs. Let $G_1:=G-P_0$.  The restriction $\sigma_1$ of $\sigma$ to $G_1$ is an embedding of $G_1$ on $\Sigma_1$ in which $F(G_1)\setminus F(G)=\{f_1,f_2\}$, where $f_i$ is the face of $G_1$ that contains $D_i$, for each $i\in\{1,2\}$. 	If $g=2$, then $P:=P_0$, $\sigma':=\sigma_1$, and $\Sigma':=\Sigma_1$ satisfies the requirements of the lemma. Assume $g>2$.  
		
		Since $\tw(G_1)\le\tw(G)$, we can apply induction to $(G_1,\sigma_1,\Sigma_1)$ to obtain a planarizer $P_1$ of $G_1$ of size at most $4(2(g-2)-1)(\tw(G)+1)$ and an embedding $\sigma_1'$ of $G_1-P_1$ onto a surface $\Sigma_1'$ of genus $0$ such that $|F(G_1-P_1)\setminus F(G_1)|\le 2(g-2)-1$.  Then $P:=P_0\cup P_1$ has size at most $|P_0|+|P_1|\le 4(2g-4)(\tw(G)+1)$ and $\sigma':=\sigma_1'$ is an embedding of $G-P$ onto $\Sigma':=\Sigma_1'$ and $|F(G-P)\setminus F(G)|\le (2(g-2)-1)+2 = 2g-3$.

		\item If $\calN$ is two-sided and surface separating, then slicing $\Sigma$ along $\calN$ produces two surfaces $\Sigma_1$ and $\Sigma_2$ of genera $g_1$ and $g_2$ respectively, such that $g_1+g_2=g$.  Since $\calN$ is non-contractible in $\Sigma$, $g_i\ge 1$ for each $i\in\{1,2\}$.  For each $i\in\{1,2\}$, $\Sigma_i\setminus\Sigma$ is a disc $D_i$ bounded by $\calN$. For each $i\in\{1,2\}$, let $G_i$ be the subgraph of $G-P_0$ that $\sigma$ embeds onto $\Sigma_i\setminus D_i$.  Then, for each $i\in\{1,2\}$, $F(G_i)\setminus F(G)=\{f_i\}$, where $f_i$ is the face of $G_i$ that contains $D_i$.
		
		For each $i\in\{1,2\}$, applying induction on $(G_i,\sigma_i,\Sigma_i)$ yields $P_i\subseteq V(G_i)$, $\sigma_i'$, and $\Sigma_i'$ with $|P_i|\le 4(2g_i-1)(\tw(G)+1)$ and $|F(G_i-P_i)\setminus F(G_i)|\le 2g_i-1$. Let $P:=P_0\cup P_1\cup P_2$.  Then
		\begin{align*}
		  |P| & \le 4(1 + (2g_1-1) + (2g_2-1))(\tw(G)+1) \\
			    & = 4(2(g_1+g_2) -1)(\tw(G)+1) = 4(2g-1)(\tw(G)+1) \enspace .
		\end{align*}
		For each $i\in\{1,2\}$, the disc $D_i$ is contained in some face $f'_i \in F(G_i-P_i)$. We let $\Sigma'$ be the surface obtained by gluing $\Sigma_1'\setminus D_1$ and $\Sigma_2'\setminus D_2$ along their common boundary $\calN$. Then $\Sigma'$ is a connected surface of genus $0$. The embeddings $\sigma_1'$ and $\sigma_2'$ naturally define an embedding $\sigma'$ of $G-P$ onto $\Sigma'$. The gluing operation merges $f'_1$ and $f'_2$ into a single face $f'_{12}$ in $G-P$. Therefore, 
		\[ F(G-P) = (F(G_1-P_1) \setminus \{f'_1\}) \cup (F(G_2-P_2) \setminus \{f'_2\}) \cup \{f'_{12}\} \enspace . \]
		The faces of $G-P$ that are not in $F(G)$ consist of at most the new faces of $G_1-P_1$ and $G_2-P_2$ and $f'_{12}$ (if $f_1$ and $f_2$ are each present in $G_1-P_1$ and in $G_2-P_2$ respectively). Thus,
		\begin{align*}
			|F(G-P)\setminus F(G)|& \le |F(G_1-P_1)\setminus F(G_1)| + |F(G_2-P_2)\setminus F(G_2)| + 1 \\
			& \le (2g_1-1)+(2g_2-1)+1 = 2g-1 \enspace . \qedhere 
		\end{align*}
	\end{itemize}
\end{proof}

\section{Proof of \texorpdfstring{\cref{thm::main}}{thm::main}}\label{sec::proof_thm1}

We now have most of the tools needed to prove \cref{genus_main_theorem}.  We make use of the following lemma, which appears in \cite{dujmović20253colouringplanargraphs}, but is closely related to various proofs and generalizations of the Planar Separator Theorem (for example the proof of the existence of cycle separators of size $O(\sqrt{n})$ in triangulations by \citet{alon.seymour.ea:planar}):

\begin{lemma}[\citet{dujmović20253colouringplanargraphs}]\label{finding_noose}
	For any plane graph $G$ and face-weighting $w: F(G)\to \R_{\geq 0}$ with total weight $N := w(F(G)),$ there exists a noose $\mathcal{N}$ of $G$ of length at most $\max\{4\tw(G), 12\} + 4$ and for which $w(F(G) \cap F(C)) \leq \frac{2}{3}N$ for each component $C$ of $G -\mathcal{N}$.
\end{lemma}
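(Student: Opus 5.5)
The plan is the standard balanced-separator argument driven by a tree-decomposition of $G$, with the face weights pushed onto the bags and the separating bag converted into a noose. Set $t:=\tw(G)$. If $t$ is small (say at most $3$), we handle $G$ directly: such graphs have a balanced separator of bounded size, and a noose through at most $16$ vertices is available. So assume $t$ is large. We may also assume $G$ is connected and, after adding edges inside faces, \emph{triangulated} in the following mild sense. Every face $f$ is split into triangles, and the weight $w(f)$ is placed on one of the new triangles. Adding edges only shrinks components, so a noose for the triangulated graph is a noose for $G$. The one care point is treewidth: triangulating can raise it. For this reason we instead keep $G$ and work with its \emph{radial} (vertex--face incidence) graph $R$. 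It is a standard fact that $\tw(R)=O(\tw(G))$ for plane graphs, via the medial-graph and branchwidth relations. This is presumably where the constant $4$ in $4\tw(G)$ comes from.

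The key steps are as follows.
\begin{itemize}
\item Take a tree-decomposition of $R$ of width $O(t)$. Give each face-node $f$ of $R$ the weight $w(f)$ and each vertex-node weight $0$.
\item Apply the usual weighted centroid argument. There is a bag $B$ such that every component of $R-B$ has weight at most $N/2$, and hence at most $\frac{2}{3}N$.
\item Replace each face-node in $B$ by two of its incident vertices, or otherwise route through it.
\end{itemize}
This yields a vertex set $X\subseteq V(G)$ of size $O(t)$ which, together with the faces in $B$, separates the plane into pieces of weight at most $\frac{2}{3}N$.

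The main obstacle is that the statement asks for a \emph{noose}, a single closed curve meeting $G$ only in vertices, and not merely a separator. For this I would use the plane duality between separators of the radial graph and closed curves, following the Miller/Alon--Seymour--Thomas approach. In the radial graph, a minimal separator between two regions corresponds to a cycle of $R$. A cycle of $R$ alternates between vertices and faces of $G$, so it is precisely a noose, of length half the cycle length.

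So rather than using $B$ directly, I would argue as follows. Take a fundamental-cycle, or nested-cycles, argument in $R$, with each cycle of $R$ having length at most roughly $2(4t+O(1))$. One way is to choose a BFS-type structure inside the bag separation and then pick the minimal cycle separating the heavy side. The alternative I would try first is induction. Take a minimal separating set in $R$ around the heaviest component of $R-B$; it forms a cycle of $R$, by planarity of $R$ and minimality. If all the resulting regions have weight at most $\frac{2}{3}N$, we are done. Otherwise we recurse into the heavy region and merge cycles, in the style of Alon--Seymour--Thomas. That merging costs at most a constant factor plus the additive $12$ and $4$ terms, which accounts for the $\max\{4\tw(G),12\}+4$ bound.

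Finally, components of $G-\mathcal{N}$ lie inside single regions of $\Sigma\setminus\mathcal{N}$, so the weight bound transfers to $F(G)\cap F(C)$. The one caveat is that the faces cut by $\mathcal{N}$ are counted on both sides. This is harmless, because the bound holds for each region, and such a face is counted only once per region.
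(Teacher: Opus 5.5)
The paper does not prove this lemma; it imports it from Dujmovi\'c, Morin, Norin, and Wood and notes its kinship with the Alon--Seymour--Thomas cycle-separator proof. Judged on its own, your proposal has a genuine gap at exactly the step you flag as the main obstacle: you never produce one noose that is both short and balanced.

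The mechanism you offer is false as stated. A minimal separator of the radial graph $R$ need not form, or even contain, a cycle of $R$. Face-nodes of $R$ are pairwise non-adjacent. For $G=C_n$ one has $R=K_{2,n}$, and the two face-nodes form a minimal separator that is an independent set. In general, a minimal separator of a plane graph $H$ is traced by a closed curve that meets $H$ only in the separator and otherwise runs through faces of $H$. For $H=R$, such a curve may cross a face $ufvf'$ of $R$ diagonally from $f$ to $f'$. In $G$ this means crossing the edge $uv$, which a noose may not do. (That minimal separators induce cycles is a fact about triangulations, not about the quadrangulation $R$.)

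Even a correct noose around the heaviest component $K$ of $R-B$ only cuts $K$ off from one of its holes. Suppose $K$ has weight about $0.3N$ and three holes of weight about $0.23N$ each. Then $K$ and the other two holes lie on one side of the noose. They can stay joined in $G-\mathcal{N}$ through vertices of $B$ not on $\mathcal{N}$, giving a component of weight about $0.77N>\frac{2}{3}N$. So the centroid property of the bag $B$ does not transfer to a single noose.

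Your fallback, recursing into the heavy region and \emph{merging cycles in the style of Alon--Seymour--Thomas}, is not an argument. Their proof is extremal, not a merging procedure. Iterated recursion would add up noose lengths over many rounds and lose any $O(\tw(G))$ bound. The missing ingredient is a genuine extremal step. For instance:
\begin{itemize}
\item among nooses of length at most $L$ with one light side, choose one minimizing the weight of the other side;
\item use Menger's theorem to show that if that side exceeds $\frac{2}{3}N$, then either a short path through it yields a better noose, or there are many disjoint crossing paths certifying treewidth larger than $\tw(G)$.
\end{itemize}
Another workable route, for suitably connected $G$, is a sphere-cut branch-decomposition of width at most $\tw(G)+1$, whose middle sets are traced by nooses. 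Place each face weight on one boundary edge and take a centroid edge of the cubic decomposition tree.

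Finally, the constants are asserted rather than derived (\emph{presumably}, \emph{accounts for}). The small-treewidth case, a balanced noose through at most $16$ vertices when $\tw(G)\le 3$, is stated without proof. An $O(\tw(G))$ bound would in fact suffice for the use in \cref{lem::reduce_tw}, but even that is not established.

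A minor correction: faces met by $\mathcal{N}$ are counted for \emph{no} component of $G-\mathcal{N}$, not on both sides, because each has a boundary vertex on $\mathcal{N}$.
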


We remark that the most obvious generalizations of \cref{finding_noose} to graphs embedded on surfaces of Euler genus $g>0$ can only guarantee that the separator is made up of $O(g)$ nooses. This becomes (at the very least) difficult to reason about when the lemma is applied iteratively to the components of $G-\calN$.  

The last tool we use is the following generalization of \cite[Lemma~10]{dujmović20253colouringplanargraphs}.  The only addition to this version of the lemma is that it supports a (small) set $Y$ of faces whose weights are undefined.  This generalization is needed when handling face-weighted higher-genus surface-embedded graphs that have been planarized using \cref{planarizer_tw}.

\begin{lemma}\label{lem::reduce_tw}
	There exists a constant $c$ such that the following is true: Let $N, q, t, y$ be real numbers such that $y\ge 0$,  $N \geq q \geq 1$, $t\ge \sqrt{1+\log_{3/2} N + y}$, let $G$ be a plane graph, let $Y\subseteq F(G)$ with $|Y|\le y$, and let $w: F(G)\setminus Y \to \R_{\geq 0}$ be a face-weighting of $G$ with $w(F(G)\setminus Y) \leq N$ and such that, for each $v\in V(G)$, either $F(G,v)\cap Y \neq \emptyset$ or $\sum_{f\in F(G,v)} w(f)>q$.	Then, there exists $S \subseteq V(G)$ such that $|S| \leq cN/(tq)$ such that $\tw(G - S) \le ct$.
\end{lemma}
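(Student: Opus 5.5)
We prove \cref{lem::reduce_tw} by a recursive separator argument modelled on the proof of \cite[Lemma~10]{dujmović20253colouringplanargraphs}, using \cref{finding_noose} to split and \cref{planar_tw_by_face_weights} to certify that the pieces left at the end have treewidth $O(t)$. The set $Y$ is handled by a charging trick: every vertex whose condition fails is incident to a face in $Y$, and \cref{planar_tw_by_face_weights} already tolerates such faces at a cost of $\sqrt{|Y|}\le\sqrt y\le t$.

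\emph{The recursive procedure.} Maintain a collection of subgraphs (components of $G$ minus the vertices removed so far), each plane-embedded as an induced subgraph of $G$. Give each face $f$ of such a subgraph $G'$ the weight $w'(f)$ equal to the sum of $w$ over the faces of $G$ contained in $f$. Mark $f$ as a $Y$-face if it contains a face of $Y$. The vertex condition is inherited: every face of $G$ at a vertex $v$ of $G'$ lies inside a face of $G'$ at $v$. So either $v$ touches a $Y$-face, or its incident weight exceeds $q$.

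For the current piece $G'$, let $N'$ be its total weight and $y'$ its number of $Y$-faces. By \cref{planar_tw_by_face_weights}, $\tw(G')\le 12\sqrt{y'+N'/q}+7$. If $N'\le t^2q$, this bound is $O(t)$ because $y'\le y\le t^2$, so we stop. Otherwise we apply \cref{finding_noose} to $G'$, with the $Y$-faces given weight $0$, and delete the noose vertices. The noose has length $O(\tw(G'))=O(\sqrt{y+N'/q})=O(\sqrt{N'/q})$, since $N'/q>t^2\ge y$. Each resulting component then carries at most $\tfrac23 N'$ of the weight.

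\emph{Counting removed vertices.} Group the pieces by level $i$, meaning pieces whose weight lies in $((3/2)^{i-1}t^2q,(3/2)^{i}t^2q]$. A piece split at level $i$ has weight greater than $(3/2)^{i-1}t^2 q$, and pieces at the same level have disjoint face sets. Hence at most $N/((3/2)^{i-1}t^2q)$ pieces are split at level $i$. Each such split costs $O(\sqrt{(3/2)^{i}t^2})=O((3/2)^{i/2}t)$ vertices. The contribution of level $i$ is therefore $O\bigl(N(3/2)^{-i/2}/(tq)\bigr)$, and summing this geometric series over $i\ge 1$ gives $|S|=O(N/(tq))$.

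One subtlety is that the weight of a face of a component can exceed the share that \cref{finding_noose} attributes to it: after deletion, several faces of $G'$ may merge into one face of the component. The lemma bounds the weight of $F(G)\cap F(C)$, so we must check that the merged faces, which contain noose vertices, do not break the $\tfrac23$ decay. The natural fix is to assign merged faces weight $0$ and count them as new $Y$-faces. That in turn increases $y'$. The merged faces of a component are bounded by the noose length, so they contribute $O(t)$ per split.

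\emph{Main obstacle.} The hardest step is controlling how these new $Y$-faces accumulate along a recursion path, whose depth is about $\log_{3/2}N$. This accumulation is where the hypothesis $t\ge\sqrt{1+\log_{3/2}N+y}$ should enter. I would try to show that each component gains only $O(1)$ new special faces per level, one per noose boundary, since a noose splits the sphere into discs. Then the total number of special faces in any final piece is at most $y+\log_{3/2}N\le t^2$, and \cref{planar_tw_by_face_weights} still gives treewidth $O(t)$ for the leftover pieces. \Cref{lem:treewidth_bound_tree_decomp} is not needed, because the final pieces are disjoint components of $G-S$.
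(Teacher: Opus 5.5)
Your architecture matches the paper's: cut pieces with \cref{finding_noose} until their weight is at most $t^2q$, certify the leftover pieces with \cref{planar_tw_by_face_weights}, and bound $|S|$ by grouping split pieces into weight classes and summing a geometric series. However, the proposal is not yet a proof, for two related reasons.

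\emph{The starting weighting does not decay.} Every face of $G$ lies in exactly one face of any subgraph $G'$. So with aggregated weights every piece has total weight $w(F(G)\setminus Y)$, and there is no $\tfrac23$ decay at all. You need, from the outset, the convention you only reach in your ``subtlety'' paragraph, which is the paper's: a piece $C$ has weight $N_C=w(F(C)\cap F(G))$, and every face of $C$ not in $F(G)$ is, like a face of $Y$, a special face of undefined weight. The vertex condition is then inherited, because a vertex of $C$ on no special face sees exactly its faces of $G$.

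\emph{The count of special faces is left open.} With that convention, everything hinges on how many special faces a piece accumulates, and your argument stops at ``I would try to show''. Your fallback of $O(t)$ new special faces per split is not enough. Along a path of depth about $\log_{3/2}N$ it gives order $t\log N$ special faces, far more than $t^2$ when $t$ is near $\sqrt{\log_{3/2}N}$. Moreover, the noose length only bounds the number of merged faces by order $(3/2)^{i/2}t$ for a piece of weight about $(3/2)^it^2q$, not by $O(t)$, so the total could reach order $\sqrt{N/q}$. Then neither the leaf bound $12\sqrt{y'+N_C/q}+7\le ct$ survives, nor your noose-length estimate $O(\sqrt{N'/q})$, which silently needs $y'=O(t^2)$.

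The missing step is the paper's observation that a child gains at most one new special face: the face $f^*$ of the child that contains the noose. It has a short proof.
\begin{enumerate}
\item Let $C'$ be a component of $C-\mathcal{N}_C$. The noose meets the drawing of $C$ only at noose vertices, none of which is in $C'$, so it lies in a single face $f^*$ of $C'$.
\item Since $C$ is connected, every other component of $C-\mathcal{N}_C$, and every edge leaving $C'$, is attached to a noose vertex.
\item Hence the part of the drawing of $C$ outside $C'$, together with the noose, is a connected set disjoint from $C'$, and so lies in $f^*$.
\item Every other face of $C'$ is therefore disjoint from $C$, and so is a face of $C$. Thus the special faces of $C'$ are among those of $C$ together with $f^*$.
\end{enumerate}
By induction, a piece at depth $d$ has at most $y+d\le y+1+\log_{3/2}N\le t^2$ special faces. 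This is exactly where the hypothesis on $t$ enters, and the rest of your argument then goes through as in the paper.

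This induction also needs the root pieces to have no special faces outside $Y$, i.e.\ $G$ connected. If $G$ is disconnected, first join its components by edges drawn inside faces between distinct boundary components. This changes neither the number of faces nor any face--vertex incidence.
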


\begin{proof}
	This proof closely follows that of \cite[Lemma~10]{dujmović20253colouringplanargraphs} with the additional modifications needed to handle the special set $Y$ of faces whose weights are undefined.  This is not a large modification, since the proof in \cite{dujmović20253colouringplanargraphs} already needs to deal with the fact that repeated applications of \cref{finding_noose} immediately lead to components that have faces of undefined weight. The reader can safely skip this proof in a first reading.
	
	For each $f\in Y$, define $w(f):=0$.
	For each connected subgraph $C$ of $G$, define $N_C := w(F(C) \cap F(G))$, define $X_C:=F(C)\setminus F(G)$, and let $X^+_C:=X_C\cup (Y\cap F(C))$.  Initialize $S:= \emptyset$, and let $\calL$ be a queue containing the connected components of $G$. While $\calL$ is not empty, dequeue the next element $C$ from $\calL$, and do one of the following.
	\begin{enumerate}[nosep,nolistsep]
		\item If $N_C \le t^2q$ then do nothing.
		\item Otherwise, apply \cref{finding_noose} to find a noose $\calN_C$ for $C$ of length at most $\max\{4\tw(C),12\}+4$  such that each component $C'$ of $C-\calN_C$ has $N_{C'}\le \tfrac{2}{3}N_C$. Define $S_C:=\calN_C\cap V(C)$ and enqueue each component $C'$ of $C-S_C$ into $\calL$. 
	\end{enumerate}
	Let $\calC$ be the set of components $C$ that are dequeued from $\calL$ and for which $N_C > t^2q$.
	This process defines a rooted forest $F$ whose roots are the components of $G$, whose non-leaf nodes are the elements in $\calC$, and in which  $C'$ is a child of $C$ if and only if $C'$ is a component of $C-S_C$.  By construction, if a node $C$ has depth $d$ in $F$, then $N_C\le (\tfrac{2}{3})^d N$.  It follows that no node $C\in\calC$ has depth greater than $\log_{3/2} (N/(t^2q))$ in $F$ and that no node of $F$ has depth greater than $1+\log_{3/2} (N/(t^2q))$.  Furthermore, if $C'$ is a child of $C$ in $F$, then $X^+_{C'}\setminus X^+_C$ contains at most one face, namely the face of $C'$ that contains $\calN_C$.  It follows that, if $C$ has depth $d$ in $F$, then $|X^+_C|\le d+|Y|\le 1+\log_{3/2}N +y$.
	
	Let $S:=\bigcup_{C\in\cal C} S_C$.  Then each component $C$ of $G-S$ is a leaf in $F$ with $N_C \le t^2q$ so, by \cref{planar_tw_by_face_weights},
	\[
	  \tw(C) 
		  \le 12\sqrt{N_C/q + |X^+_C|}+7 
		\le 12\sqrt{t^2 + 1+\log_{3/2} N + |Y|}+7 
		  \le ct 
	\]
	for a sufficiently large constant $c$, since $t\ge \sqrt{1+\log_{3/2} N+y}$.  Thus, $\tw(G-S)\le ct$ since every component $C$ of $G-S$ has treewidth at most $ct$.

	All that remains is to bound the size of $S$. For each $i\in\N$, define the contour 
	\[
		\Delta_i:=\{C\in\calC: (\tfrac{3}{2})^i t^2q < N_C \le (\tfrac{3}{2})^{i+1} t^2q\}.
	\]
	If $C_1$ and $C_2$ are distinct elements in $\Delta_i$ then $C_2$ is not a descendant of $C_1$ in $F$ because every descendant $C'$ of $C_1$ has
	$N_{C'} \le (\tfrac{2}{3})N_{C_1} \le (\tfrac{2}{3})(\tfrac{3}{2})^{i+1} t^2q = (\tfrac{3}{2})^i t^2q < N_{C_2}$.
	In particular, this means that $F(C_1)\cap F(G)$ and $F(C_2)\cap F(G)$ are disjoint.  Therefore
	\[  |\Delta_i|(\tfrac{3}{2})^it^2q < \sum_{C\in\Delta_i} N_C \le N \quad \text{so}\quad 
	|\Delta_i| < (\tfrac{2}{3})^i(N/t^2q)
	\]
	for each $i\in\N$.

	For each $i\in\N$ and each $C\in\Delta_i$, by \cref{planar_tw_by_face_weights},
	\begin{align*}
		|S_C| &\le \max\{4\tw(C),12\}+4 \\ 
		  &\le \max\{4(12\sqrt{N_C/q + |X^+_C|}+7),12\}+4 \\
		  &\le \max\{4(12\sqrt{(\tfrac{3}{2})^{i+1}t^2+\log_{3/2} N + y}+7),12\}+4 \\
		  &\le (\tfrac{3}{2})^{(i+1)/2} ct 
	\end{align*}
	for a sufficiently large constant $c$.
	Therefore,
	\[
	  \sum_{C\in\Delta_i}|S_C|
		\le |\Delta_i| \cdot (\tfrac{3}{2})^{(i+1)/2}ct 
		\le (\tfrac{2}{3})^i(N/t^2q)\cdot (\tfrac{3}{2})^{(i+1)/2}ct 
		= (\tfrac{2}{3})^{(i-1)/2}\cdot cN/tq
	\]
  and
	\[
	  |S| = \sum_{i\ge 0}\sum_{C\in\Delta_i}|S_C| 
		\le \sum_{i\ge 0}(\tfrac{2}{3})^{(i-1)/2}\cdot cN/tq 
		\le c'N/tq
	\]
  for a sufficiently large constant $c'$.
\end{proof}

With these tools in place, the proof of \cref{genus_main_theorem} now happens quickly.

\begin{lemma}\label{reduce_treewidth_genus}
	Let $g\in\N$, let $N$ and $q$ be real numbers such that $N/q^2\ge \sqrt{1+\log_{3/2} N+2g}$, let $G$ be a graph embedded on a surface of Euler genus $g$, let $w:F(G)\to \mathbb{R}^+$ be a face-weighting of $G$  with $w(F(G)) \leq N$ and such that $\sum_{f\in F(G,v)} w(f)>q$ for each $v\in V(G)$. Then there exists $S\subseteq V(G)$ of size $O_g(q+\sqrt{N/q})$ such that $\tw(G-S)\in O_g(N/q^2)$.
\end{lemma}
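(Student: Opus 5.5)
We would first planarize $G$ with \cref{planarizer_tw}, then remove a small separator with \cref{lem::reduce_tw}, and take $S$ to be the union of the two. If $g=0$, skip the first step and take $P:=\emptyset$, $Y:=\emptyset$.

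\textbf{Bounding $\tw(G)$.} Apply \cref{surface_tw_by_face_weights} with $Y=\emptyset$. This gives $\tw(G)\in O_g(\sqrt{N/q})$.

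\textbf{Planarizing.} For $g\ge 1$, apply \cref{planarizer_tw}. This yields a planarizer $P$ with $|P|\le 4(2g-1)(\tw(G)+1)\in O_g(\sqrt{N/q})$. It also yields a plane embedding $\sigma'$ of $G':=G-P$ in which the set $Y$ of faces that are not faces of $\sigma$ has $|Y|\le 2g-1$.

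\textbf{Transferring the weighting.} Restrict $w$ to $F(G')\setminus Y$; every such face is a face of $G$, so its weight is defined and the total is still at most $N$. Now take a vertex $v$ of $G'$ that is not incident to any face of $Y$. Then every face of $G'$ at $v$ is an original face of $G$, and no new face was created around $v$ by deleting $P$. Hence $F(G',v)=F(G,v)$, and the weight at $v$ still exceeds $q$.

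This is the step I expect to be the main obstacle. One must check that faces of $G'$ which coincide with faces of $G$ really carry the same incidences at $v$. If a face of $G$ merged with others after deleting $P$, it becomes a new face and lands in $Y$, so the dichotomy ``incident to $Y$ or heavy'' should hold. The point to verify is that the faces of $G'$ not in $Y$ are genuinely identical, as regions, to faces of $G$. I would argue this directly from how $\sigma'$ is constructed in the proof of \cref{planarizer_tw}.

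\textbf{Reducing treewidth.} Apply \cref{lem::reduce_tw} to $G'$ with $y:=2g$ and $t:=N/q^2$. The hypothesis $t\ge\sqrt{1+\log_{3/2}N+y}$ is exactly the assumption of the statement; we also need $N\ge q$, which follows since any vertex has weight $>q$ (and the case $V(G)=\emptyset$ is trivial). This gives $S'\subseteq V(G')$ with
\[ |S'|\le cN/(tq)=cq \]
and $\tw(G'-S')\le ct=O(N/q^2)$.

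\textbf{Conclusion.} Set $S:=P\cup S'$. Then $|S|\in O_g(q+\sqrt{N/q})$, and $G-S=G'-S'$ has treewidth $O_g(N/q^2)$, as required.
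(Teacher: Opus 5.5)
Your proposal is correct and follows the paper's proof essentially step for step. Both bound $\tw(G)$ via \cref{surface_tw_by_face_weights} with $Y=\emptyset$, planarize with \cref{planarizer_tw} and take $Y$ to be the at most $2g-1$ new faces, apply \cref{lem::reduce_tw} with $t=N/q^2$, and output $S=P\cup S'$. Your extra points (the separate $g=0$ case, checking $N\ge q$, and the transfer of face incidences to $G-P$) are details the paper leaves implicit, not a different route.
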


\begin{proof}
	By applying \cref{surface_tw_by_face_weights} with $Y=\emptyset$ and the face weighting function $w$, we see that $\tw(G)\in O_g(\sqrt{N/q})$.  By \cref{planarizer_tw}, there exists a planarizer $P$ of $G$ of size $O_g(\tw(G))=O_g(\sqrt{N/q})$ and an embedding $\sigma'$ of $G-P$ on a surface of Euler genus $0$ such that $|F(G-P)\setminus F(G)|\le 2g-1$.  Let $Y:=F(G-P)\setminus F(G)$. Let $t=N/q^2$.  By \cref{lem::reduce_tw}, there exists $S'\subseteq V(G-P)$ of size $O(N/qt)=O(q)$ such that $\tw(G-(P\cup S'))\le t=N/q^2$.  Let $S:=P\cup S'$. Then $|S|\in O_g(q + \sqrt{N/q})$ and $\tw(G-S)\in O_g(N/q^2)$.
\end{proof}

\begin{lemma}\label{three_step_genus}
	Let $g\in\N$ and let $q,n\in\N$ such that $n/q^2\ge \sqrt{1+\log_{3/2} n + 2g}$, and let $G$ be an $n$-vertex graph embedded on a surface $\Sigma$ of Euler genus $g$.  Then there exists a $q$-separator $S$ of $G$ of size $O_g(n/\sqrt{q})$ and a $q$-separator $S'\subseteq S$ of $G[S]$ of size $O_g(q+\sqrt{n/q})$ such that $\tw(G[S]-S')\in O_g(n/q^2)$.
\end{lemma}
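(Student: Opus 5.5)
The plan is to build $S$ as an inclusion-minimal $q$-separator of $G$, turn the components of $G-S$ into face weights of an embedding of $G[S]$, and then apply \cref{reduce_treewidth_genus}. The separator property of $S'$ in $G[S]$ is the part I expect to be hardest, and I describe below how I would try to obtain it.

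\emph{Step 1 (the separator $S$).} Graphs of Euler genus at most $g$ form a hereditary class of treewidth $O_g(\sqrt{n})$ (by \cref{excluded_grid_genus}). So \cref{q_separator_hereditary} gives a $q$-separator of $G$ of size $O_g(n/\sqrt q)$. Shrink it to an inclusion-minimal $q$-separator $S$. By minimality, every $v\in S$ satisfies $1+\sum_{C}|V(C)|>q$, where $C$ ranges over the components of $G-S$ adjacent to $v$; otherwise $S\setminus\{v\}$ would still be a $q$-separator.

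\emph{Step 2 (face weighting).} Restrict the embedding of $G$ to $G[S]$. Each component $C$ of $G-S$ is connected and disjoint from $G[S]$, so it lies inside a single face $f_C$ of $G[S]$. Every $v\in S$ adjacent to $C$ is incident to $f_C$, because the edge from $v$ into $C$ runs inside $f_C$. Set $w(f):=\sum_{C:f_C=f}|V(C)|$, plus a small correction such as giving each vertex a unit share, to handle the $+1$ and the strict inequality. Then $w(F(G[S]))\le n+|S|=O(n)$, and $\sum_{f\in F(G[S],v)}w(f)>q$ for every $v\in S$ (up to replacing $q$ by $q-1$). The hypothesis $n/q^2\ge\sqrt{1+\log_{3/2}n+2g}$ is what \cref{reduce_treewidth_genus} needs with $N=O(n)$. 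That lemma then yields $S_0\subseteq S$ with $|S_0|=O_g(q+\sqrt{n/q})$ and $\tw(G[S]-S_0)=O_g(n/q^2)$.

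\emph{Step 3 (making $S'$ a $q$-separator of $G[S]$).} It remains to upgrade $S_0$ to a set $S'\supseteq S_0$ that is also a $q$-separator of $G[S]$, while keeping $|S'|=O_g(q+\sqrt{n/q})$. The first thing I would try is to track where the vertices of $S\setminus S_0$ can lie, using the recursive noose structure in the proof of \cref{lem::reduce_tw}. Each leaf component there carries face weight at most $t^2q$. Each of its vertices contributes more than $q$ to the faces around it, and each face weight is counted only by the vertices incident to that face. So I would attempt a charging argument bounding the number of vertices of a leaf component by its weight divided by $q$, possibly after cutting the component further with \cref{lem:tw_separator_bound} using its $O_g(n/q^2)$ treewidth.

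The catch is that a single heavy face can be incident to many vertices, so this charging need not give components of at most $q$ vertices. Closing this gap is the main obstacle. If it cannot be closed with the stated size bound, I would revisit whether the conclusion is really needed in this form, for instance whether a bounded weight per component of $G[S]-S'$ suffices in place of a bound on its number of vertices. The treewidth bound $\tw(G[S]-S')\in O_g(n/q^2)$ survives adding vertices to $S'$, so Step 3 does not affect it.
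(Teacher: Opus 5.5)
Your Steps 1 and 2 are the paper's entire proof. The paper takes an inclusion-minimal $q$-separator $S$ from \cref{q_separator_hereditary}, weights each face of $G[S]$ by the number of vertices of $G-S$ inside it, uses minimality to get weight at least $q$ around every $v\in S$, and applies \cref{reduce_treewidth_genus}. It stops there, so its $S'$ is only shown to satisfy $|S'|\in O_g(q+\sqrt{n/q})$ and $\tw(G[S]-S')\in O_g(n/q^2)$. Your unit-share correction is a slightly cleaner way to obtain the strict inequality $>q$ that \cref{reduce_treewidth_genus} asks for; the paper only records $\ge q$. It also makes the total weight exactly $n$, so you can take $N:=n$ and the hypothesis is literally the one assumed.

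Your Step 3 targets a property the paper neither proves nor uses, so drop it rather than try to close it. The phrase ``$q$-separator $S'\subseteq S$ of $G[S]$'' is a slip in the statement. In the proof of \cref{genus_main_theorem}, the $q$-separator of $G[S]$ is $S'\cup S''$, where $S''$ comes from applying \cref{lem:tw_separator_bound} to $G[S]-S'$. The analogous \cref{h_minor_free_separator} makes no such claim.

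The literal claim is also far stronger than anything the paper establishes. Take $q=\lceil n^{1/3}\rceil$, which satisfies the hypothesis for large $n$. Colouring $G-S$ red, $S\setminus S'$ blue and $S'$ green would then give every graph of Euler genus $g$ a $3$-colouring with clustering $O_g(q+\sqrt{n/q})=O_g(n^{1/3})$. That would supersede \cref{genus_main_theorem} and match the $\Omega(n^{1/3})$ lower bound for planar graphs.

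Your remark that a heavy face can be incident to many vertices correctly explains why the face weights alone cannot control component sizes, and your fallback instinct was right. With Step 3 removed, your proposal proves the intended lemma.
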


\begin{proof}
	By \cref{q_separator_hereditary} and (the paragraph following) \cref{excluded_grid_genus}, $G$ has a $q$-separator $S$ of size $O_g(n/\sqrt{q})$.  We may assume that $S$ is inclusion-minimal in the sense that the component of $G-(S\setminus\{v\})$ that contains $v$ contains more than $q$ vertices, for each $v\in S$. For each face $f\in F(G[S])$, let $w(f)$ be the number of vertices of $G-S$ embedded in the interior of $f$. Then, for each $v\in S$, the component of $G-(S\setminus\{v\})$ that contains $v$ is contained in $\{v\}\cup\bigcup_{f\in F(G[S],v)} f$.  Therefore, $1+\sum_{f\in F(G[S],v)}w(f)>q$, so $\sum_{f\in F(G[S],v)}w(f)\ge q$, for each $v\in S$. Let $N := w(F(G[S])) = |V(G-S)| \le n$.
	By \cref{reduce_treewidth_genus}, there exists $S'\subseteq S$ such that $|S'|\in O_g(q+\sqrt{n/q})$ and $\tw(G[S]-S')\in O_g(n/q^2)$.
\end{proof}

\begin{proof}[Proof of \cref{genus_main_theorem}]
	Fix an embedding of $G$ on a surface $\Sigma$ of Euler genus $g$.  Let $q:=n^{4/9}$, so that $t:=n/q^2=n^{1/9}$. If $n^{1/9}<\sqrt{1+\log_{3/2} n + 2g}$ then $n^{2/9}-\log_{3/2} n < 1+2g$ so $n\in O_g(1)$.  In this case we can colour all vertices of $G$ red and the resulting colouring has clustering $O_g(1)$. Now assume that $n^{1/9}\ge\sqrt{1+\log_{3/2} n + 2g}$.
    
    Apply \cref{three_step_genus} to obtain a $q$-separator $S$ of $G$ of size $O_g(n/\sqrt{q})=O_g(n^{7/9})$ and $S'\subseteq S$ of size $O_g(q+\sqrt{n/q})=O_g(n^{4/9})$ such that $\tw(G[S]-S')\in O_g(n/q^2)\le c(n^{1/9})$ for some $c$ that depends only on $g$. Apply \cref{lem:tw_separator_bound} to $G[S]-S'$ with $t=c n^{1/9}$ to obtain a $q$-separator $S''\subseteq S\setminus S'$ of $G[S]-S'$ with $|S''|\le(ct+1)|V(G[S]-S')|/q = O_g(n^{4/9})$.  Colour each component of $G-S$ red. Colour each component of $G[S]-(S'\cup S'')$ blue.  Colour the vertices of $S'\cup S''$ green.

    Each red component has size at most $n^{4/9}$ because $S$ is a $q$-separator of $G$.  Each green component has size at most $n^{4/9}$ because $S''$ is a $q$-separator of $G[S]-S'$, so $S''\cup S'$ is a $q$-separator of $G[S]$.  Each blue component has size $O(n^{4/9})$ because $|S'\cup S''|\in O(n^{4/9})$.
\end{proof}

\section{\boldmath \texorpdfstring{$H$}{H}-Minor-Free Graphs}
\label{sec::h_minor_free}

In this section, we prove \cref{minor_free_main_theorem}. We begin by reviewing the main parts of the Graph Minor Structure Theorem that are used in our proof.

\subsection{The Graph Minor Structure Theorem}

Let $G_0$ be a graph embedded on a surface $\Sigma$. A closed disk $D \subseteq \Sigma$ is \defin{$G_0$-clean} if the interior of $D$ is disjoint from $G_0$, and the boundary of $D$ only intersects $G_0$ in vertices of $V(G_0)$. Let $x_1, \dots, x_s$ be the vertices of $G_0$ on the boundary of $D$ in the order they occur around the boundary of $D$. A \defin{$D$-vortex} consists of a graph $H$ and a path-decomposition $(B_{x_1}, \ldots, B_{x_s})$ of $H$ such that $x_i \in B_{x_i}$ for each $i \in \{1, \ldots, s\}$ and $V(G_0 \cap H) = \{x_1, \dots, x_s\}$.

For integers $a, g, r \geq 0$ and $w \geq 1$, an \defin{$(a, g, r, w)$-near-embedding} of a graph $G$ is a tuple $\calE := (A, G_0, G_1, \dots, G_r)$ such that:
\begin{enumerate}[nosep,nolistsep]
	\item  $A \subseteq V(G)$ with $|A| \leq a$.
	\item $G_0, G_1, \dots, G_r$ are subgraphs of $G$ such that $G \setminus A = G_0 \cup \dots \cup G_r$.
	\item $G_1, \dots, G_r$ are pairwise vertex-disjoint.
	\item $G_0$ is embedded on a surface $\Sigma$ of Euler genus at most $g$,
	\item There are pairwise disjoint $G_0$-clean disks $D_1, \dots, D_r$ in $\Sigma$, and
	\item $G_i$ is a $D_i$-vortex whose path-decomposition has width at most $w$, for each $i \in \{1,\dots, r\}$.
\end{enumerate}
The vertices of $A$ are called \defin{apex vertices}. The apex vertices can be adjacent to any vertex of $G$. We say a graph is \defin{$\ell$-near-embeddable}  if it has an $(a, g, r, w)$-near-embedding for some $a, g, r, w \leq \ell$.

The following result, variants of which appear in \citet{grohe2003local,dvorak.kral.ea:testing}, states that any large treewidth in an $\ell$-near-embedded graph is due to the surface-embedded part $G_0$.

\begin{lemma}[\cite{grohe2003local,dvorak.kral.ea:testing}]\label{tw_from_near_embedding}
	There exists a constant $c$ such that, for every $\ell\ge 1$ and every graph $G$ that has an $\ell$-near-embedding $(A,G_0,G_1,\ldots,G_r)$,  we have $\tw(G)\le c\ell(\tw(G_0)+1)$.
\end{lemma}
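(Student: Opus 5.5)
The plan is to build a tree-decomposition of $G$ with one bag per vortex-path-decomposition bag of $G_0$, plus enlarged bags, and then bound the width directly. Start from a tree-decomposition $(W_x:x\in V(T))$ of $G_0$ of width $k:=\tw(G_0)$. Removing the apex set $A$ costs at most $|A|\le \ell$: we will add $A$ to every bag at the end. So it suffices to handle $G-A=G_0\cup G_1\cup\dots\cup G_r$.

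The key step is to replace each vortex boundary vertex $x_j$ of $G_i$ by the bag $B_{x_j}$ of its path-decomposition. Define new bags
\[
W'_x := W_x \cup \bigcup\{B_{x_j}: x_j\in W_x,\ x_j \text{ a boundary vertex of some } G_i\}.
\]
Each new bag has size at most $(k+1)(w+1)\le (k+1)(\ell+1)$. Edges of $G_0$ are covered because $W'_x\supseteq W_x$. An edge of $G_i$ lies in some $B_{x_j}$, and some bag $W_x$ contains $x_j$, so the edge is covered.

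Connectivity is the real issue. A vertex $v$ of $G_i$ lies in a contiguous interval $B_{x_a},\dots,B_{x_b}$ of the path-decomposition. The nodes of $T$ whose bags now contain $v$ form the union of the subtrees $T_{x_a},\dots,T_{x_b}$, where $T_{x_j}$ denotes the set of nodes whose bags contain $x_j$. This union is connected only if consecutive boundary vertices $x_j,x_{j+1}$ have intersecting subtrees, and that holds if $x_jx_{j+1}$ were an edge of $G_0$.

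To fix this, I would first add to $G_0$ the cycle $x_1x_2\cdots x_sx_1$ drawn along the boundary of each clean disk $D_i$. These curves can be routed just inside $D_i$, and the disks are disjoint with interiors avoiding $G_0$. Call the resulting graph $G_0^+$; it is still embedded in $\Sigma$, so it has Euler genus at most $g\le\ell$. The main obstacle is then to show that $\tw(G_0^+)=O(\tw(G_0)+1)$, since adding these cycles could in principle raise the treewidth.

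To handle this, I would add a new vertex $z_i$ inside each $D_i$ adjacent to all $x_j$. This keeps the graph embedded in $\Sigma$, and with these hubs present the boundary cycle edges cost little. Taking a tree-decomposition of $G_0$ and adding $z_1,\dots,z_r$ to all bags increases the width by at most $r\le\ell$. The cycle edges $x_jx_{j+1}$ are then handled by slightly altering the vortex decomposition: use bags $B_{x_j}\cup B_{x_{j+1}}$, which at most doubles $w$. In effect, the subtrees of consecutive $x_j$ are linked through the subtree of $z_i$.

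Concretely, after adding the $z_i$ to every bag, the union of the $T_{x_j}$ together with the nodes where $z_i$ appears is all of $T$. Instead of relying on the $T_{x_j}$ alone, I would put the entire bag $B_{x_j}$ for $j\in[a,b]$ into every bag. If this overshoots, I would bound it instead via the grid-minor relation, using \cref{excluded_grid_genus} on a minor of $G_0^+$ obtained by contracting each vortex onto its disk. In that approach, the grid obtained must essentially lie in $G_0$ apart from $O(\ell)$ rows near the disks. Either route gives a width of $O(\ell(\tw(G_0)+1))$, and adding $A$ to every bag contributes a further additive $\ell$.
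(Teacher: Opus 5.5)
The paper gives no proof of this lemma; it quotes it from \cite{grohe2003local,dvorak.kral.ea:testing}. Judged on its own, your proposal sets up the standard framework correctly but leaves its one nontrivial step open. Replacing each society vertex $x_j$ by $B_{x_j}$ and adding $A$ to every bag controls bag sizes and covers all edges. The result is a tree-decomposition as soon as consecutive society vertices $x_j,x_{j+1}$ of each vortex share a bag, i.e.\ as soon as you start from a tree-decomposition of $G_0^+$. Everything therefore rests on bounding $\tw(G_0^+)$, and none of your three attempts does this.

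The hub vertices do not address the connectivity problem. Putting $z_i$ into every bag does not change which nodes contain a vortex vertex $v$; these are still $\bigcup_{j\in[a,b]}T_{x_j}$. Linking these pieces ``through $z_i$'' means inserting $v$ into every bag on the $T$-paths between $T_{x_j}$ and $T_{x_{j+1}}$, and one node can lie on such paths for $\Theta(s)$ values of $j$. For example, take $G_0$ edgeless on $x_1,\ldots,x_s$ with the width-$0$ path-decomposition $\{x_1\},\{x_3\},\{x_5\},\ldots,\{x_2\},\{x_4\},\ldots$, and a width-$2$ vortex with extra vertices $u_j\in B_{x_j}\cap B_{x_{j+1}}$. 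Then all $s-1$ vertices $u_j$ must enter the two middle nodes. Enlarging vortex bags to $B_{x_j}\cup B_{x_{j+1}}$ only lengthens the intervals. Putting $\bigcup_{j\in[a,b]}B_{x_j}$ into every bag puts essentially the whole vortex into every bag. So neither helps.

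The grid-minor fallback is an assertion rather than an argument. The claim that a grid minor of $G_0^+$ ``essentially lies in $G_0$ apart from $O(\ell)$ rows'' is exactly what needs proof. In the plane it can be proved: each added cycle bounds a face of $G_0^+$, so any of its edges used by a grid model touches only branch sets of grid vertices on one face of the grid, and discarding those costs $O(1)$ rows and columns per disk. On surfaces of positive genus the faces of an embedded grid are not controlled this simply.

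More seriously, your target $\tw(G_0^+)=O(\tw(G_0)+1)$ is false with a constant independent of $\ell$. Let $G_0$ be $m$ parallel vertical paths, so it has treewidth $1$. Place about $m^2/4$ pairwise disjoint clean disks, each touching the four corners of a unit cell, staggered from one column-pair to the next. The added $4$-cycles turn $G_0$ into a wall, so $\tw(G_0^+)=\Theta(m)=\Theta(\sqrt{\ell})$. With vortices of width $\ell$, your bound $(\tw(G_0^+)+1)(w+1)$ is then of order $\ell^{3/2}$, whereas the lemma asserts $O(\ell)$. Going through \cref{excluded_grid_genus} adds a further factor $6(g+1)$ with $g\le\ell$.

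So even if completed, your route gives only a bound of the form $O_\ell(\tw(G_0)+1)$. That is all the paper actually uses, but it is not the stated $c\ell(\tw(G_0)+1)$, and your closing claim that either route yields $O(\ell(\tw(G_0)+1))$ is unsupported.
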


Lastly, we make use of the following variant of the Graph Minor Structure Theorem \cite{ROBERTSON200343} due to \citet{DIESTEL20121189}.
\begin{theorem}[\citet{DIESTEL20121189}]\label{thm::diestel_kawa_muller_woll}
	For every integer $t \geq 1$, there exists an integer $\ell \geq 1$ such that every $K_t$-minor-free graph $G$ has a rooted tree-decomposition $(B_x : x \in V(T))$ such that
	\begin{enumerate}[nosep,nolistsep,label=\rm(\roman*)]
		\item for each $x \in V(T)$, the torso $G\langle B_x\rangle$ is equipped with an $\ell$-near-embedding $(A_x,G^x_0,G^x_1,\ldots,G^x_{r_x})$ in which $G^x_0$ is a $2$-cell embedded graph, and
		\item for each $x\in V(T)$ and each child $y$ of $x$, 
        \begin{enumerate}[label=\rm(\alph*)]
            \item $(B_x\cap B_y)\setminus A_x$ is contained in a bag of a vortex of $G\langle B_x\rangle$; or
            \item $(B_x\cap B_y)\setminus A_x$ consists of at most three vertices on a single face of $G^x_0$.
        \end{enumerate}
	\end{enumerate}
\end{theorem}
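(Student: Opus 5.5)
This result is quoted from \citet{DIESTEL20121189}, and the paper uses it as a black box. If I had to prove it, I would derive it from the Robertson--Seymour Graph Minor Structure Theorem \cite{ROBERTSON200343} and then refine how the adhesions sit in each torso. The plan is the usual tangle-driven induction. Fix $t$ and let $\ell$ be large compared to the constants of the excluded-minor theorem for $K_t$. We prove a stronger rooted statement by induction on $|V(G)|$: for any set $Z\subseteq V(G)$ with $|Z|\le 3\ell'$ (for a suitable constant $\ell'$), there is a decomposition as claimed in which $Z$ lies in the root bag.

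\textbf{Step 1 (small graphs and splitting).} If $|V(G)|$ is small, a single bag suffices, with everything placed in the apex set. Otherwise, if $Z$ is not well-linked, a separation of small order splits $Z$. We recurse on both sides, adding the separator to the new root sets, and glue the two decompositions at a root bag of bounded size. That root bag is trivially near-embeddable, with all of it apex.

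\textbf{Step 2 (the tangle case).} If $Z$ is well-linked, it defines a tangle $\mathcal{T}$ of large order in $G$. Since $G$ has no $K_t$ minor, the local structure theorem of Robertson and Seymour applies. It gives an apex set $A$ with $|A|\le\ell$ and a near-embedding of $G-A$ relative to $\mathcal{T}$: a graph $G_0$ on a surface of bounded Euler genus, plus boundedly many vortices of bounded width. All parts of $G$ not captured by this structure attach through separations $(X,Y)$ of order at most $3$ to the surface part, or inside a single vortex bag, after deleting $A$. The central bag $B_x$ is the vertex set of the near-embedded structure. For each attached piece $Y$ we recurse on $G[Y]$, taking as root set $(X\cap Y)\cup A$ (or $\cup A$ together with the relevant vortex bag), which has bounded size. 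The resulting decompositions become the child subtrees. Condition (ii) is then exactly the dichotomy delivered by the local structure theorem.

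\textbf{Step 3 (cleaning the torso).} The torso adds cliques on the adhesions, and these must not destroy the near-embedding. An adhesion of type (b) has at most three vertices on a common face of $G_0$. Its clique can be drawn inside that face, so the embedding survives, after a little care when several adhesions use the same face: we nest them or split the face. An adhesion of type (a) lies inside one vortex bag. Its clique only adds edges within that bag, so the path-decomposition width is unchanged. Finally, we make $G_0$ $2$-cell embedded. If a face is not a disc, we cut along a non-contractible curve inside the face and re-embed on a surface of smaller genus. This terminates because the genus strictly drops, and the cut meets no vertices, so nothing is added to $A$.

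The step I expect to be the main obstacle is Step 2. The difficulty is guaranteeing that \emph{every} small separation leaves the near-embedding through at most three surface vertices on a single face, or through one vortex bag. The raw structure theorem only bounds adhesion sizes, so I would first try to show that low-order separations through the surface part can be pushed onto faces. The tool would be high representativity of $G_0$ relative to the tangle, with vortices absorbing the exceptional cases. Keeping the root set $Z$ of bounded size throughout the induction, so that the constants do not blow up, is the other delicate point.
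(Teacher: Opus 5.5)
The paper gives no proof of this theorem. It imports it from Diestel, Kawarabayashi, M\"uller and Wollan and uses it as a black box. Your outline (carry a bounded root set $Z$, split when $Z$ is not well-linked, otherwise apply the tangle-relative structure theorem and recurse on the pieces) is the natural route. As written, though, the steps that carry the content are either assumed or would fail.

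First, Step~2 credits the Robertson--Seymour local theorem with more than it gives. It provides three things: an apex set $A$; a drawing of $G-A$ in $\Sigma$ up to $3$-separations, meaning every non-vortex piece lies in a cell that meets the rest in at most three vertices on the cell boundary; and boundedly many vortices whose linear decompositions have bounded \emph{depth} (adhesion) but arbitrarily large bags. So the obstacle you single out, pushing low-order separations onto faces via representativity, is not where the difficulty lies. Those attachments come for free, and each adhesion triangle can be drawn inside its own cell.

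What is actually missing is the passage from bounded depth to bounded width, and this is exactly where alternative~(a) comes from. Take a vortex with boundary vertices $x_1,\ldots,x_s$ and bags $X_1,\ldots,X_s$. In the torso, keep only $X'_j:=\{x_j\}\cup(X_{j-1}\cap X_j)\cup(X_j\cap X_{j+1})$. Check that $(X'_1,\ldots,X'_s)$ is a path-decomposition of the thinned vortex, of width at most twice the depth. Then hang each $G[X_j\cup A]$ below $x$ as a child $y$ with $(B_x\cap B_y)\setminus A_x\subseteq X'_j$. Without this step, your child root set ``$A$ together with the relevant vortex bag'' is unbounded.

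Second, Step~2 loses the root set. You take $B_x$ to be the near-embedded structure, but vertices of $Z$ may lie inside cells or vortex bags. Moreover, once this decomposition is glued below a parent, the parent adhesion (a subset of $Z$ of size up to $3\ell'$) becomes a clique in the torso at $x$. The near-embedding cannot absorb that clique unless $Z\subseteq A_x$, and your Step~3 treats only child adhesions. Putting $Z$ into $A_x$ enlarges the child root sets to $(X\cap Y)\cup A\cup(Z\cap Y)$. The induction then closes only if you do two things. You must use the well-linkedness of $Z$ to bound $|Z\cap Y|$ by the order of the separation, since each piece $Y$ is a small side of the tangle. You must also choose $\ell'$ against the apex and depth bounds of the local theorem, and $\ell$ against $\ell'$. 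This is precisely the point you leave open.

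Third, the $2$-cell clean-up in Step~3 does not work as described. A $2$-cell embedding forces $G_0$ to be connected, and nothing in your construction ensures this: components of $G_0$ may be joined only through apices or vortices. In that case two things go wrong:
\begin{itemize}
\item A face can fail to be a disc while containing no non-contractible curve at all, for example an annular face on the sphere. Your procedure then halts with non-disc faces.
\item A surface-separating curve can leave parts of $G_0$ on both sides, so you do not end up with a single surface.
\end{itemize}
Even when $G_0$ is connected, the cutting curve must avoid the vortex discs $D_i$, and this can be impossible. Suppose $G_0$ is a non-contractible cycle on the torus and a vortex disc in the annular face touches the cycle from both sides. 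Then every non-contractible curve in that face meets the disc. Reaching condition~(i) therefore needs an additional argument that handles disconnected $G_0$ and vortex discs lying across non-disc faces, for instance by splitting such a vortex and moving a bounded number of its vertices into the apex set. Your sketch does not provide one.
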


We make use of one easy helper lemma:

\begin{lemma}\label{treewidth_of_closure}
    Let $G$ be an $n$-vertex graph, let $\mathcal{T}:=(B_x:x\in V(T))$ be a tree-decomposition of $G$ in which each torso is an $\ell$-almost-embedded graph, and let $G^\star:=\bigcup_{x\in V(T)}\torso{G}{B_x}$.  Then the treewidth of $G^\star$ is in $O_\ell(\sqrt{n})$
\end{lemma}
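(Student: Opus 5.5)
The plan is to show that $G^\star$ itself is $H'$-minor-free for some graph $H'$ that depends only on $\ell$. Once that is established, the classical separator theorem of Alon, Seymour and Thomas \cite{AlonSeymourThomas} gives treewidth $O_\ell(\sqrt{n})$. There is an alternative route that avoids this: first bound the treewidth of each torso in terms of the number of vertices of its bag, then combine the torsos using \cref{lem:treewidth_bound_tree_decomp}. I describe both; the second one keeps all dependence inside the paper's own tools.

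The first step is to note that $\mathcal{T}$ is also a tree-decomposition of $G^\star$, and that its torsos are exactly the graphs $\torso{G}{B_x}$. Indeed, every edge of $G^\star$ lies in some $\torso{G}{B_x}$, hence inside the bag $B_x$. Moreover, every adhesion $B_x\cap B_y$ is a clique in both $\torso{G}{B_x}$ and $\torso{G}{B_y}$, so taking torsos of $G^\star$ adds nothing new. By \cref{lem:treewidth_bound_tree_decomp}, it therefore suffices to prove $\tw(\torso{G}{B_x})\le c_\ell\sqrt{n}$ for every $x$.

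The second step bounds the treewidth of a single torso. Fix $x$ and let $(A_x,G^x_0,\dots,G^x_{r_x})$ be its $\ell$-near-embedding. By \cref{tw_from_near_embedding},
\[
  \tw(\torso{G}{B_x})\le c\ell(\tw(G^x_0)+1).
\]
The graph $G^x_0$ is embedded on a surface of Euler genus at most $\ell$ and has at most $|B_x|\le n$ vertices. The paragraph following \cref{excluded_grid_genus} then gives $\tw(G^x_0)\le 6(\ell+1)\sqrt{n}$. Combining these, $\tw(\torso{G}{B_x})\le c\ell(6(\ell+1)\sqrt{n}+1)\in O_\ell(\sqrt{n})$, and the first step finishes the proof.

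The main point needing care is the first step: checking that the torsos of $\mathcal{T}$ viewed as a decomposition of $G^\star$ coincide with the torsos of $G$, so that \cref{lem:treewidth_bound_tree_decomp} applies verbatim. This holds because $G\subseteq G^\star$, and every edge added to form $G^\star$ lies inside an adhesion clique and hence inside a single bag. A second, minor point is that \cref{tw_from_near_embedding} is stated for a graph with a near-embedding, and here it is applied to the torso graph itself, which is exactly the object that carries the near-embedding. No step involves the number of bags or the value of $n$ beyond $|B_x|\le n$, so the constant depends only on $\ell$.
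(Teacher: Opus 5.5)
Your proposal is correct and is essentially the paper's proof: bound each torso via \cref{tw_from_near_embedding} together with the $O_g(\sqrt{n})$ treewidth bound for genus-$g$ graphs from the paragraph after \cref{excluded_grid_genus}, then combine the torsos with \cref{lem:treewidth_bound_tree_decomp}. The excluded-minor route via Alon--Seymour--Thomas is never needed. Your extra check that $\mathcal{T}$ is a tree-decomposition of $G^\star$ whose torsos are exactly the graphs $G\langle B_x\rangle$ is a detail the paper leaves implicit; to make it airtight, note that an edge of $G\langle B_y\rangle$ with both ends in $B_x$ ($y\neq x$) has both ends in every bag on the $x$--$y$ path of $T$, hence lies in an adhesion incident to $x$.
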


\begin{proof}
    Let $(A_x,G^x_0,G^x_1,\ldots,G^x_{r_x})$ be an $\ell$-almost-embedding of $\torso{G}{B_x}$.  By \cref{excluded_grid_genus}, $\tw(G^x_0)\in O_{\ell}(\sqrt{|V(G^x_0)|})\subseteq O_\ell(\sqrt{n})$ for each $x\in V(T)$.  By \cref{tw_from_near_embedding}, $\tw(\torso{G}{B_x})\in O_\ell((\tw(G^x_0)+1))\in O_\ell(\sqrt{n})$. By \cref{lem:treewidth_bound_tree_decomp}, $\tw(G^\star)\le\max_{x\in V(T)}\{\tw(\torso{G}{B_x})\in O_\ell(\sqrt{n})$.   
\end{proof}

\subsection{Proof of \texorpdfstring{\cref{minor_free_main_theorem}}{Theorem 3}}\label{sec::minor_free_proof}

The crux of the proof of \cref{minor_free_main_theorem} is the following lemma, which is the equivalent of \cref{three_step_genus} for $H$-minor-free graphs.

\begin{lemma}\label{h_minor_free_separator}
  For every graph $H$, there exists $c_H\ge 1$ such that for all $q,n\in\N$ with $n/q^2\ge\sqrt{c_H+\log_{3/2} n}$, every $n$-vertex $H$-minor-free graph $G$ has a $q$-separator $S$ of size $O_H(n/\sqrt{q})$ and a set $S'\subseteq S$ of size $O_H(q)$ such that $\tw(G[S]-S')\in O_H(n/q^2)$.
\end{lemma}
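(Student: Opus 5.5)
We follow the strategy of \cref{three_step_genus}, applied torso by torso to the decomposition of \cref{thm::diestel_kawa_muller_woll}. Fix $t$ with $H$ a minor of $K_t$, let $\ell$ be given by \cref{thm::diestel_kawa_muller_woll}, and take the rooted tree-decomposition $(B_x:x\in V(T))$ with near-embeddings $(A_x,G^x_0,\ldots,G^x_{r_x})$. Let $G^\star:=\bigcup_x\torso{G}{B_x}$. By \cref{treewidth_of_closure} and minor-closedness, $G^\star$ and all its induced subgraphs have treewidth $O_\ell(\sqrt{n})$, since an induced subgraph inherits a decomposition whose torsos are induced subgraphs of $\ell$-near-embedded graphs. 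Hence \cref{q_separator_hereditary} gives a $q$-separator $S$ of $G^\star$ of size $O_H(n/\sqrt q)$. It is also a $q$-separator of $G\subseteq G^\star$. We shrink $S$ to be inclusion-minimal with respect to $G^\star$. Working in $G^\star$ is convenient because adhesions are cliques, so each component of $G^\star-S$ is seen coherently by each torso.

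Next we build, for each node $x$, a face-weighted surface graph. Let $S_x:=(S\cap V(G^x_0))$. First delete the apices $A_x$, which costs at most $\ell$ vertices per node, and the vortex bags, treated as in \cref{tw_from_near_embedding} by "folding" each vortex onto its disc. Consider $G^x_0[S_x]$, and give each face $f$ the weight equal to the number of vertices of $G-S$ that lie in the part of $G^\star$ hanging off $f$. Here ``hanging off'' means the vertices of $G^x_0-S$ inside $f$, plus the subtrees of $T$ whose adhesions lie on $f$ via case (b), plus the vortex contents. Minimality of $S$ gives every $v\in S_x$ incident faces of total weight at least $q$, unless $v$ meets a vortex face or is near an apex. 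We declare those faces to be in $Y$; there are only $O(\ell)$ of them. By charging each vertex of $G-S$ to a single node, for example the highest node containing it, the total weights satisfy $\sum_x N_x\le n$.

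For each node $x$ with $N_x$ large, we apply the argument of \cref{reduce_treewidth_genus}. \Cref{surface_tw_by_face_weights} gives $\tw=O(\sqrt{N_x/q+\ell})$. \Cref{planarizer_tw} then gives a planarizer $P_x$ of size $O_\ell(\sqrt{N_x/q})$ together with $O(\ell)$ extra faces for $Y$. Finally, \cref{lem::reduce_tw} with the global $t:=n/q^2$ gives $S'_x$ of size $O(N_x/(tq))$ with treewidth $O(t)$ of the remainder. Summing, $\sum_x|S'_x|=O(n/(tq))=O(q)$. The apices, vortex vertices and planarizers are also placed in $S'$. Once every torso of the induced decomposition of $G^\star[S]-S'$ has treewidth $O(t)$, with vortices and apices removed, \cref{lem:treewidth_bound_tree_decomp} gives $\tw(G[S]-S')=O_H(n/q^2)$.

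\textbf{Main obstacle.} The main obstacle is the per-node additive costs: the $O(\ell)$ apices, the vortex bags, and the planarizers $O(\sqrt{N_x/q})$. Summed over possibly $\Omega(n)$ nodes, these can exceed $O(q)$. Following the overview, we would avoid paying anything at nodes whose surface part has treewidth already $O(t)$, i.e.\ $N_x/q\lesssim t^2$. For those nodes we instead bound the torso treewidth by \cref{tw_from_near_embedding} applied to $G^x_0[S_x]$ directly. Since a vortex of width $\le\ell$ only multiplies treewidth by $O(\ell)$, vortices need not be deleted at all, and apices likewise add only $\ell$ to the treewidth. It therefore suffices to planarize only the nodes with $N_x>t^2q$. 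There are at most $n/(t^2q)$ such nodes, since the charges are disjoint. By Cauchy--Schwarz, $\sum_x\sqrt{N_x/q}\le\sqrt{(n/(t^2q))\cdot(n/q)}=n/(tq)=q$. This threshold choice is what makes the total $O_H(q)$. The condition $n/q^2\ge\sqrt{c_H+\log_{3/2}n}$ supplies the hypothesis of \cref{lem::reduce_tw}, with $y=O(\ell)$ absorbed into $c_H$.
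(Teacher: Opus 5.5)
Your proposal has a genuine gap at the claim that minimality of $S$ gives every $v\in S_x$ incident faces of weight at least $q$ ``unless $v$ meets a vortex face or is near an apex'', with only $O(\ell)$ exceptional faces put into $Y$. The vortex exception is harmless, since there are at most $r\le\ell$ vortex faces. The apex exception is not.

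Suppose some $a\in A_x$ is not in $S$, and let $K_a$ be the component of $G^\star-S$ containing $a$. Take any $v\in S_x$ adjacent to $K_a$. The component of $G^\star-(S\setminus\{v\})$ through $v$ then contains $K_a$, which reaches, via $a$, vertices in faces of $G^x_0[S_x]$ far from $v$. So that component can exceed $q$ while the faces at $v$ carry weight near $0$. Since $a$ may be adjacent to arbitrarily many vertices of $G^x_0$, neither the exceptional vertices nor the faces you would need in $Y$ number $O(\ell)$. Then \cref{surface_tw_by_face_weights} gives nothing, including at the ``small'' nodes where your last paragraph relies on it. Forcing all apices into $S$ does not help: nothing in \cref{thm::diestel_kawa_muller_woll} prevents $\bigcup_x A_x$ from having $\Theta(n)$ vertices, for example many tiny bags consisting only of apices. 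Keeping apices out of $S'$, as your last paragraph does, addresses treewidth but not this failure.

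Independently, your charging is inconsistent. For the lower bound, $w(f)$ must count whole descendant subtrees hanging off $f$, because the large component at $v$ may lie almost entirely below $x$. But then each vertex is counted at every ancestor, and $\sum_x N_x$ can be of order $n$ times the depth of $T$. Charging each vertex only to its highest node gives $\sum_x N_x\le n$, but it destroys the lower bound. Either way, neither the count of at most $n/(t^2q)$ large nodes nor the bound $\sum_x|S'_x|=O(q)$ is justified.

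The missing idea is the paper's grouping of the decomposition into $O(n/q)$ parts. The paper inducts on $|V(G^\star)|$. It picks a node $x$ with $|V(G^\star_x)|>q$ but $|V(G^\star_y)|\le q$ for every child $y$, and recurses on the remaining graph $\overline{G}^\star_x$. For the part $T_x$, it proceeds as follows.
\begin{itemize}
\item It takes a $q$-separator $Q_x$ of $G^\star_x-(A_x\cup\partial_{\mathcal{T}}(x))$.
\item It replaces each vertex of $Q_x$ lying below $B_x$ by the adhesion $B_x\cap B_y$ it hangs from.
\item It adds $A_x\cup\partial_{\mathcal{T}}(x)$, and makes only $R_x\cap V(X_0)$ inclusion-minimal.
\end{itemize}
Consequently the part's separator lies in the single bag $B_x$. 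The apices and the parent adhesion are in $S$ at cost $O(\ell)$ per part, which totals $O(\ell n/q)\subseteq O(n/\sqrt q)$. The component of $v$ can no longer escape through an apex or upward. Each child subtree, of size at most $q$, feeds the weight of a single face $f_y$, and the weights of different parts are disjoint.

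With this in place your threshold idea works, and more simply than via Cauchy--Schwarz. When $n_x>t^2q$ we have $\sqrt{n_x/q}<n_x/(tq)$, so the planarizer is absorbed into the $O(n_x/(tq))$ bound from \cref{lem::reduce_tw}.
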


\begin{proof}
	Let $G$, $q$, and $n$ be as given in the statement of the lemma. We can assume that $q>q_0$ for some constant $q_0$ depending only on $H$.  Otherwise, taking $S=V(G)$ and $S'=\emptyset$ works because $S$ is trivially a $q$-separator of $G$, $|S|=n\in O_H(n/\sqrt{q})$, $\tw(G[S]-S')=\tw(G)\in O_H(\sqrt{n})\subseteq O_H(n/q^2)$, and $|S'|=0\in O_H(q)$.  We now assume that $q>q_0$ for an appropriate constant $q_0$ depending only on $H$.

	Let $\calT := (B_x : x\in V(T))$ be the rooted tree-decomposition and $\ell:=\ell(H)$ be the integer obtained by applying \cref{thm::diestel_kawa_muller_woll} to $G$.  Thus, each torso of $\calT$ is equipped with an $\ell$-near-embedding.
	Let $G^\star:=\bigcup_{x\in V(T)} \torso{G}{B_x}$.
	This proof is by induction on $|V(G^\star)|$ and will establish a stronger statement:  We will show that there exist constants $c_2,c_3,c_4>0$ (depending only on $H$) such that, for every $q\ge 1$ and every $t\ge \sqrt{c_H+\log_{3/2} n}$, there exists $S\subseteq V(G^\star)$ and $S'\subseteq S$ such that
	\begin{enumerate}[nosep,nolistsep,label=(P\arabic*)]
		\item\label{req:s_qsep} $S$ is a $q$-separator of $G^\star$;
		\item\label{req:s_size} $|S|\le c_2 |V(G^\star)|/\sqrt{q}$;
		\item\label{req:st_tw} $\tw(G^\star[S]-S')\le c_3 t$; and
		\item\label{req:sprime_size} $|S'|\le c_4 |V(G^\star)|/(qt)$.
	\end{enumerate}
	To establish the lemma, we can apply the above with $t:=n/q^2$. By \ref{req:s_qsep}, $S$ is a $q$-separator of $G^\star\supseteq G$. By \ref{req:s_size}, $|S|\le c_2 n/\sqrt{q}$, so $|S|\in O_H(n/\sqrt{q})$.  By \ref{req:st_tw} we have $\tw(G^\star[S]-S')\le c_3 t$, so $\tw(G^\star[S]-S')\in O_H(n/q^2)$.  Finally, by \ref{req:sprime_size}, $|S'|\le c_4 n/(qt) = c_4 q$, so $|S'|\in O_H(q)$.

	If $|V(G^\star)|\le q$, then $S:=S':=\emptyset$ also satisfy our requirements. Now assume $|V(G^\star)|>q$.  
	
	For each $x\in V(T)$, let $G^\star_x:=\bigcup_{y\in V(T_x)}\torso{G}{B_y}=G^\star[\bigcup_{y\in V(T_x)}B_y]$.
	Let $x$ be a node of $T$ such that $|V(G^\star_x)|> q$ and $|V(G^\star_y)|\le q$ for each child $y$ of $x$.  (Such a node exists, because the root $x_0$ of $T$ satisfies  $|V(G^\star_{x_0})|=n> q^2$.)  Let $n_x:=|V(G^\star_x)|$.

	Let $\overline{T}_x:=T-V(T_x)$ and let $\overline{G}^\star_x:=G^\star[\bigcup_{y\in V(\overline{T}_x)}B_y]$ and let $\overline{n}_x:=|V(\overline{G}^\star_x)|$.  Then $(B_x:x\in V(\overline{T}_x))$ is a tree-decomposition of $\overline{G}^\star_x$ satisfying the conditions of \cref{thm::diestel_kawa_muller_woll}.  Furthermore, $|V(\overline{G}^\star_x)|\le |V(G^\star)|-n_x+|\partial_\calT(x)|\le |V(G^\star)|-q+|\partial_\calT(x)|\le|V(G^\star)|-q_0+|\partial_\calT(x)|<|V(G^\star)|$, for a sufficiently large $q_0>|\partial_\calT(x)|$ that depends only on $H$.  Thus, we may apply induction on $\overline{G}^\star_x$ with tree-decomposition $(B_x:x\in V(\overline{T}_x))$.  The result of this induction is a $q$-separator $\overline{S}_{x}$ of $\overline{G}^\star_x$ of size at most $c_2\overline{n}_x/\sqrt{q}$ and $\overline{S}_x'\subseteq \overline{S}_x$ of size at most $c_4\overline{n}_x/(qt)$ such that $\tw(\overline{G}^\star_x[\overline{S}_x]-\overline{S}_x')\le c_3 t$.

	With most of the work (seemingly) done by induction, we can now focus on $G^\star_x$. (We cannot apply induction on $G^\star_x$, because $x$ may be the root of $T$, in which case $T_x=T$ and $G^\star_x=G^\star$.)  Let $(A_x,X_0,X_1,\ldots,X_r)$ be the $\ell$-near-embedding of $\torso{G}{B_x}$.	For each $i\in\{1,\ldots,r\}$, let $f_i$ be the face of $X_0$ that contains the $X_0$-clean-disc $D_i$ that defines the vortex $X_i$ and let $(C^{(i)}_{v}:v\in D_i\cap V(X_0))$ be the path-decomposition of the vortex $X_i$.

	Our goal is to weight the faces of $X_0$ so that we can use the machinery developed in \cref{graph_surfaces}.	For each $i\in\{1,\ldots,r\}$ define $\mathdefin{d(f_i)}:=\sum_{v\in V(f_i)} 1/\deg_{X_0}(v) + |V(X_i)\setminus V(X_0)|$. For each face $f\in F(X_0)\setminus\{f_1,\ldots,f_r\}$, let $\mathdefin{d(f)}:=\sum_{v\in V(f)} 1/\deg_{X_0}(v)$.  Since $X_0$ is $2$-cell embedded, each vertex $v$ of $X_0$ appears on $\deg_{X_0}(v)$ faces of $X_0$. Therefore $\sum_{f\in F(X_0)} d(f)=|V(X_0)| + \sum_{i=1}^r |V(X_i)\setminus V(X_0)|=|B_x\setminus A_x|$. 

	The weights $d(f)$, for $f\in F(X_0)$ account for the vertices $B_x\setminus A_x$.  What remains is to account for the vertices of $G^\star_x-B_x$.
    To do this, we will associate each child $y$ of $x$ with a face \defin{$f_y$} of $X_0$ and increase the weight of $f_y$ by $|V(G^\star_y-B_x)|$. By \cref{thm::diestel_kawa_muller_woll}, for each child $y$ of $x$ one of the following applies:
    \begin{enumerate}[nosep,nolistsep,label=(\alph*)]
        \item $B_x\cap B_y\setminus A_x$ is contained in some bag of the vortex $X_i$ for some $i\in\{1,\ldots,r\}$.  In this case, we define $\mathdefin{f_y}:=f_i$.
        \item $B_x\cap B_y\setminus A_x$ is a set of at most $3$ vertices of $X_0$ on a single face $f$ of $X_0$.  In this case, we define $\mathdefin{f_y}:=f$
    \end{enumerate}
    For each face $f\in F(X_0)$, define
	\[
		\mathdefin{\delta(f)}:=\sum_{y\in N_{T_x}(x):f_y=f} |V(G^\star_y-B_x)| \enspace .
	\]
	For each face $f$ of $X_0$, define
	\[
		\mathdefin{w(f)}:=d(f)+\delta(f) \enspace .
	\]
	Then
	\[
		\sum_{f\in F(X_0)}w(f) = |B_x\setminus A_x|+\sum_{y\in N_{T_x}(x)}|V(G^\star_y-B_x)| = |V(G^\star_x-A_x)| \enspace .
	\]

	By \cref{treewidth_of_closure,q_separator_hereditary}, there exists a $q$-separator \defin{$Q_x$} of $G^-_x:=G^\star_x-(A_x\cup\partial_\calT(x))$ of size at most $c_5|V(G^\star_x)|/\sqrt{q}$.  Let $B^-_x:=B_x\setminus(A_x\cup\partial_\calT(x))$.  The separator $Q_x$ may contain vertices not in $B^-_x$. Our next step is to replace these with vertices in $B^-_x$ so that we can restrict our attention to the torso $\torso{G}{B_x}$ (and, ultimately, to the embedded part $X_0$ of $\torso{G}{B_x}$). 
    
    For each $v\in Q_x$, define the \defin{representative} \defin{$R_x(v)$} of $v$ as follows:
	\begin{itemize}[nosep,nolistsep]
		\item If $v\in B^-_x$, then $\mathdefin{R_x(v)}:=\{v\}$.

		\item Otherwise, $v$ is a vertex of $G^\star_y-B_x$ for some child $y$ of $x$.  Then $\mathdefin{R_x(v)}:=B^-_x\cap B_y$.
	\end{itemize}
	Observe that, for each $v\in Q_x$, $R_x(v)\subseteq B^-_x$ and $|R_x(v)|\le\ell$.
	Let $\mathdefin{R_x}:=\bigcup_{v\in Q_x} R_x(v)$.	Then $|R_x|\le\ell|Q_x|\le c_5\ell |V(G^\star_x)|/\sqrt{q}$ and $Q_x\cap B^-_x\subseteq R_x\subseteq B^-_x$.

  By definition, $Q_x\cup A_x\cup\partial_\calT(x)$ is a $q$-separator of $G^\star_x$.  We now argue that $\mathdefin{S_x}:=R_x\cup A_x\cup\partial_\calT(x)$ is a $q$-separator of $G^\star_x$.  Let $C$ be a component of $G^\star_x-S_x$, so $V(C)\cap S_x=\emptyset$.  If $V(C)\subseteq B^-_x$, then $V(C)\cap Q_x=\emptyset$, so $C$ is a connected subgraph of $G^-_x-Q_x$.  In this case $|V(C)|\le q$ since $Q_x$ is a $q$-separator of $G^-_x$.  If $C$ is a subgraph of $G^\star_y-B_x$ for some child $y$ of $x$, then $|V(C)|\le |V(G^\star_y)|\le q$ since $x$ has no child with $|V(G^\star_y)|>q$.  The only remaining possibility is that $C$ contains a vertex in $B_x\setminus S_x$ and $C$ contains a vertex in $G^\star_y-B_x$ for some child $y$ of $x$. In this case $C$ contains a vertex of $G^\star_y-B_x$, so $R_x$ contains $B^-_x\cap B_y$ and $S_x$ contains $B_x\cap B_y$.  But this is not possible, since $B_x\cap B_y$ separates $G^\star_y-B_x$ from $B_x\setminus S_x$. Therefore, every component $C$ of $G^\star_x-S_x$ has at most $q$ vertices, so $S_x$ is a $q$-separator of $G^\star_x$.

	The tools in \cref{graph_surfaces}, in particular \cref{surface_tw_by_face_weights}, require inclusion-minimal separators.  We cannot eliminate vertices in $A_x$ from $S_x$ because they may be adjacent to any vertex in $B_x$.  We cannot eliminate vertices in $\partial_\calT(x)$ from $S_x$ because we rely on them to separates $G^\star_x$ from $\overline{G}^\star_x$.  Instead, we make $R_x\cap V(X_0)$ inclusion-minimal by repeatedly removing any vertex $v\in R_x\cap V(X_0)$ such that $(R_x\cup A_x\cup\partial_\calT(x))\setminus\{v\}$ is a $q$-separator of $G^\star_x$.  Redefine $\mathdefin{S_x}:=R_x\cup A_x\cup\partial_\calT(x)$ based on the (now inclusion-minimal) set $R_x$.

	The next step is to bound the treewidth of $X_0[R_x]$ using \cref{surface_tw_by_face_weights} by weighting the faces of $X_0[R_x]$ using the weight function $w$. for each $f\in F(X_0[R_x])$, let $w(f):=\sum_{f'\in F(X_0):f'\subseteq f}w(f')$.  Since each face of $X_0$ is contained in exactly one face of $X_0[R_x]$, we have $\sum_{f\in F(X_0[R_x])}w(f)=\sum_{f\in F(X_0)}w(f)=|V(G^\star_x-A_x)|$.

	We now show that, for each $v\in R_x\cap V(X_0)$, the faces of $X_0[R_x]$ incident to $v$ have total weight at least $q$; that is, $\sum_{f\in F(X_0[R_x],v)}w(f)>q$. Let $v\in R_x\cap V(X_0)$ and let $C$ be the component of $G^\star_x-(S_x\setminus\{v\})$ that contains $v$. Then $|V(C)|>q$, otherwise $v$ would have been removed from $R_x$ while making $R_x\cap V(X_0)$ inclusion-minimal.  Fix a vertex $u\neq v$ in $C$.  We want to show that $u$ contributes $1$ to $w(f)$ for some $f\in F(X_0[R_x],v)$.  This ensures that each vertex in $C-v$ contributes $1$ to $\sum_{f\in F(X_0[R_x],v)} w(f)$, so that $\sum_{f\in F(X_0[R_x],v)} w(f) \ge |V(C)-1|\ge q$.
    
	Let $P$ be a shortest path from $u$ to $v$ in $C$. Let $u'$ be the first vertex of $P$ that is in $B_x$. (The following arguments consider the possibility that $u'=v$.)  Thus $u'\in B^-_x$.  We claim that the subpath $P'$ of $P$ from $u'$ to $v$ is entirely contained in $\torso{G}{B_x}-(S_x\setminus\{v\})$.  Indeed, the only other possibility is that $P'$ contains a vertex $w$ in $G^\star_y-B_x$ for some child $y$ of $x$.  Since $B_x\cap B_y$ separates $w$ from $\{v,u'\}$, there is a subpath of $P'$ that contains $w$ and that contains two vertices $w', w''\in B^-_x\cap B_y$. However, $B^-_x\cap B_y$ is a clique in $G^-_x$, so this subpath of $P'$ could be replaced by the edge $w'w''$, contradicting the fact that $P$ is a shortest path.  Thus, $P'$ is a path from $u'$ to $v$ in $\torso{G}{B_x}-(S_x\setminus\{v\})$. 

	Let $x_0,\ldots,x_s:=P'$, so $x_0=u'$ and $x_s=v$.  With a small abuse of terminology, let us say, for each $j\in\{1,\ldots,r\}$ and each $u\in V(X_j)\setminus V(X_0)$ that $u$ \defin{is in the interior} of $f_j$.  Taking this abuse one step further, if $u$ is in the interior of $f_j$ and $f_j\subseteq g$ for some face $g$ of $X_0[R_x]$, we also say that $u$ is in the interior of $g$.  We will now show that $x_0,\ldots,x_{s-1}$ are all in the interior of the same face $f_u$ of $X_0[R_x]$.  Furthermore, if $u'\neq v$, then $f_u$ is incident to $v=x_s$. For each $i\in\{1,\ldots,s\}$, let $g_i$ be the face of $X_0[R_x]$ that contains $x_{i-1}$ in its interior.  This is well-defined because $x_{i-1}\not\in R_x$, by the definition of $P$ and $C$.  Then the edge $x_{i-1}x_i$ is either an edge of $X_0$ or is an edge of $X_j$ for some $j\in\{1,\ldots,r\}$. In either case, $x_i$ is either in the interior of $g_i$ (if $i <s$, because $x_i\not\in R_x$) or $x_i$ is on the boundary of $g_i$ (if $i=s$).  Thus, $g_1=g_2=\cdots=g_s$.  Furthermore, if $u'\neq v$ then $f_u:=g_1=\cdots=g_s$ is a face of $X_0[R_x]$ incident to $v=x_s$.  (We will deal with the case $u'=v$ separately, below).  Now, there are two possibilities for the vertex $u$ that we are trying to account for:
	\begin{enumerate}[nosep,nolistsep]
			\item If $u'=u$ then $u'=u\neq v$ by the definition of $u$, so $x_0=u\not\in R_x$ is in the interior of $f_u$. There are two cases:
			\begin{enumerate} 
				\item $u\in V(X_0)$: in this case, $u$ contributes $1/\deg_{X_0}(u)$ to each incident face of $X_0$.  Since $u\not\in R_x$, each face of $X_0$ incident to $u$ is contained in $f_u$, so $u$ contributes $1$ to $w(f_u)$.
				\item $u\in V(X_j)\setminus V(X_0)$ for some $j\in\{1,\ldots,r\}$: in this case, $u$ is in the interior of $f_j$, so $u$ contributes $1$ to $\delta(f_j)$, and $f_j\subseteq f_u$.
			\end{enumerate}
			\item If $u\neq u'$ then $u$ is a vertex of $G^\star_y-B_x$ for some child $y$ of $x$.
			\begin{enumerate}[label=\rm(\alph*)]
					\item If $B_x\cap B_y\setminus A_x$ is contained in a bag of $X_j$ for some $j\in\{1,\ldots,r\}$, then $f_y=f_j$.  Then $u$ contributes $1$ to $\delta(f_j)$. The vertex  $u'$ is in the interior of $f_j$ and $f_j\subseteq f_u$ (if $u'\neq v$) or $f_j$ is incident to $v$ (if $u'=v$). In either case, $u$ contributes $1$ to $\delta(f)$ (and therefore to $w(f)$) for some $f\in F(X_0[R_x],v)$.
										
					\item Otherwise, $B_x\cap B_y\setminus A_x$ is contained in a single face $f_y$ of $X_0$ and $f_y$ is incident to $u'$.  Then $f_y\subseteq f_u$ (if $u'\neq v$) or $f_y$ is incident to $v$ (if $u'=v$). In either case, $u$ contributes $1$ to $\delta(f)$ (and therefore to $w(f)$) for some $f\in F(X_0[R_x],v)$. 
			\end{enumerate}
	\end{enumerate}
	Summarizing, every vertex $u\in V(C-\{v\})$ is counted in $w(f_u)$ for some face $f_u\in F(X_0[R_x],v)$, so $\sum_{f\in F(X_0[R_x],v)}w(f)\ge |V(C)-1|\ge q$.  Therefore, by \cref{surface_tw_by_face_weights}, $X_0[R_x]$ has treewidth at most $c_6\sqrt{n_x/q}$.  The graph $G^\star[S_x]$ inherits an $\ell$-near-embedding, $(A_x,X_0[S_x],X_1[S_x],\ldots,X_r[S_x])$ from $\torso{G}{B_x}$. By \cref{tw_from_near_embedding}, $\tw(G^\star_x[S_x])\le c_7(\tw(X_0[S_x])+1)\le c_7(\tw(X_0[R_x])+1)+|S_x\setminus R_x|\le c'_7(\tw(X_0[R_x])+1)$.	
	
	If $n_x\le t^2q$, then we define  $S'_x=\emptyset$.  In this case,  $\tw(G^\star[S_x]-S'_x)=\tw(G^\star_x[S_x])\le c_7(c_6\sqrt{n_x/q}+1)+ |S_x\setminus R_x| \le c'_7(c_6\sqrt{n_x/q}+1)\le c'_7(c_6+1) t \le c_3 t$ (by choosing $c_3 \ge c'_7(c_6+1)$).

	If $n_x > t^2 q$ then there is more work to do.  
	By \cref{planarizer_tw}, $X_0[R_x]$ has a planarizer $P_x$ of size at most $c_8\sqrt{n_x/q}$ and $X_0[R_x]-P_x$ is a plane graph with at most $2\ell$ faces that are not faces of $X_0[R_x]$. Let $Y:=F(X_0[R_x]-P_x)\setminus F(X_0[R_x])$, so $|Y|\le 2\ell$. Recall that $t\geq \sqrt{c_H+\log_{3/2} n}\ge\sqrt{1+2\ell+\log_{3/2} n}$ for $c_H\ge 1+2\ell$. Thus, $X_0[R_x]-P_x$ is a plane graph, $w(f)$ is defined for all $f\in F(X_0[R_x]-P_x)\setminus Y$, $|Y|\le 2\ell$, $t\ge\sqrt{1+|Y|+\log_{3/2} n}$ and, for each $v\in V(X_0[R_x]-P_x)$, either $F(X_0[R_x-P_x],v)\cap Y\neq\emptyset$ or $\sum_{f\in F(X_0[R_x]-P_x,v)}w(f)\ge q$.  Therefore, by \cref{lem::reduce_tw}, applied to the plane graph $X_0[R_x]-P_x$, with face weighting $w:F(X_0[R_x]-P_x)\setminus Y\to\N^+$, there exists $S''_x\subseteq R_x\setminus P_x$ of size at most $c_9 n_x/(qt)$ such that $\tw(X_0[R_x]-(P_x\cup S''_x))\le c_9 t$. Let $S'_x:=P_x\cup S''_x$. Again, \cref{tw_from_near_embedding} implies that $\tw(G^\star_x[S_x]-S'_x)\le \tw(G^\star_x[R_x]-S'_x)+|S_x\setminus R_x|\le c_7(\tw(X_0[R_x]-S'_x)+1)+|S_x\setminus R_x| \le c'_7(c_9 t+1) \le c_{10} t \le c_3 t$ (by letting $c_{10} = c'_7(c_9+1)$ and choosing $c_3 \ge c_{10}$).

  Finally, let $S:=S_x\cup \overline{S}_x$ and $S':=S'_x\cup \overline{S}'_x$.  We now verify that $S$ and $S'$ satisfy the required properties:
	\begin{enumerate}[nosep,nolistsep,label=(P\arabic*)]
		\item Since $\partial_{\calT}(x)\subseteq S_x\subseteq S$, each component $C$ of $G^\star-S$ is contained in $G^\star_x-S_x$ or is contained in $\overline{G}^\star_x-\overline{S}_x$.  In the former case, $C$ has size at most $q$ since $S_x$ is a $q$-separator of $G^\star_x$.  In the latter case, $C$ has size at most $q$ since $\overline{S}_x$ is a $q$-separator of $\overline{G}^\star_x$.
		\item $|S|\le |S_x| + |\overline{S}_x| \le \ell c_5 n_x/\sqrt{q} + 3\ell + c_2\overline{n}_x/\sqrt{q} \le c_2(n_x+\overline{n}_x-2\ell)/\sqrt{q} \le c_2 n/\sqrt{q}$ (by choosing $c_2 \ge \ell c_5 + 5\ell$).
		\item By \cref{lem:treewidth_bound_tree_decomp}, we have that $\tw(G^\star[S]-S')\le \max\{ \tw(G^\star_x[S_x]-S'_x), \tw(\overline{G}^\star_x[\overline{S}_x]-\overline{S}'_x) \}\le \max\{c_3 t, c_3 t\} = c_3 t$.  
		\item To bound $|S'|$, there are two cases to consider:
		\begin{itemize}[nosep,nolistsep]
			\item If $n_x\le t^2 q$ then $S'_x=\emptyset$, so $|S'|=|\overline{S}'_x|\le c_4\overline{n}_x/(qt)\le c_4(n-n_x+2\ell)/(qt)\le c_4 n/(qt)$.
			\item If $n_x > t^2 q$ then $|S'|=|\overline{S}'_x|+|S'_x|\le c_4\overline{n}_x/(qt)+(c_8+c_9)n_x/(qt)\le c_4 n/(qt)$ (by choosing $c_4$ sufficiently large). \qedhere
		\end{itemize}
	\end{enumerate}
\end{proof}

\begin{proof}[Proof of \cref{thm::3-colouring_H_free_minor}]
	Let $G$ be an $n$-vertex $H$-minor-free graph.
    Let $q:=n^{4/9}$ so that $t:=n/q^2=n^{1/9}$. If $n^{1/9}<\sqrt{c_H+\log_{3/2} n}$ then $n^{2/9}-\log_{3/2} n < c_H$ so $n\in O_H(1)$.  In this case we can colour all vertices of $G$ red and the resulting colouring has clustering $O_H(1)$. Now assume that $n^{1/9}\ge\sqrt{c_H+\log_{3/2} n}$.
    
    By \cref{h_minor_free_separator} there exists an $q$-separator $S$ of $G$ of size $O_H(n/q^{1/2}) = O_H(n^{1 - 2/9}) = O_H(n^{7/9})$ and $S'\subseteq S$ of size $O_H(q) = O_H(n^{4/9})$ such that $\tw(G[S]-S') \in O_H(n/q^{2}) = O_H(n^{1-8/9}) = O_H(n^{1/9})$.  By \cref{lem:tw_separator_bound}, there exists a $q$-separator $S''\subseteq S\setminus S'$ of $G[S]-S'$ of size $O_H((|S\setminus S'|/q)\cdot\tw(G[S]-S')) = O_H((n^{7/9}/n^{4/9})\cdot n^{1/9}) = O_H(n^{4/9})$. Colour each component of $G-S$ red. Colour each component of $G[S]-(S'\cup S'')$ blue. Colour the vertices of $S'\cup S''$ green.

    Each red component has size at most $n^{4/9}$ because $S$ is a $q$-separator of $G$.  Each green component has size at most $n^{4/9}$ because $S''$ is a $q$-separator of $G[S]-S'$, so $S''\cup S'$ is a $q$-separator of $G[S]$.  Each blue component has size $O(n^{4/9})$ because $|S'\cup S''|\in O(n^{4/9})$.
\end{proof}

\section{Conclusion}

This work leaves open the question of determining the optimal clustering achievable for $3$-colouring graphs from minor-closed graph families.  The current best upper bound is $O(n^{4/9})$ from the current paper. The best known lower bound is $\Omega(n^{4/10})$ from \cite{LINIAL_MATOUŠEK_SHEFFET_TARDOS_2008}, which applies already to $K_6$-minor-free graphs.

\section*{Acknowledgements}

This question was posed at the Thirteenth Annual Workshop on Geometry and Graphs, Jan 30–Feb 6, 2026 at the Bellairs Research Institute of McGill University in Holetown, Barbados. The authors are grateful to the organizers and workshop participants for creating a stimulating research environment.

\bibliographystyle{plainurlnat}
\bibliography{references}
\end{document}